\newtheorem{theorem}{Theorem}[section]
\newtheorem{lemma}[theorem]{Lemma}
\newtheorem{proposition}[theorem]{Proposition}
\theoremstyle{definition}
\newtheorem{definition}[theorem]{Definition}
\newtheorem{example}[theorem]{Example}
\theoremstyle{remark}
\newtheorem{remark}[theorem]{Remark}
\numberwithin{equation}{section}
\newcounter{FNC}[page]
\def\fauxfootnote#1{{\addtocounter{FNC}{2}$^\fnsymbol{FNC}$%
     \let\thefootnote\relax\footnotetext{$^\fnsymbol{FNC}$#1}}}
\newcommand{\N}{{\mathbb N}}
\newcommand{\Q}{{\mathbb Q}}
\newcommand{\Z}{{\mathbb Z}}
\newcommand{\frakS}{{\mathfrak S}}
\newcommand{\Des}{\ensuremath\mathrm{Des}}
\newcommand{\ep}{\ensuremath\mathrm{end}}
\newcommand{\up}{\ensuremath\mathrm{up}}
\newcommand{\dw}{\ensuremath\mathrm{dw}}
\newcommand{\calA}{{\mathcal A}}
\newcommand{\calC}{{\mathcal C}}
\newcommand{\calG}{{\mathcal G}}
\newcommand{\precdot}{{\prec\!\!\!\cdot\,}}
\newcommand\xleftrightarrow[2][]{%
  \ext@arrow 9999{\longleftrightarrowfill@}{#1}{#2}}
\newcommand\longleftrightarrowfill@{%
  \arrowfill@\leftarrow\relbar\rightarrow}
\newcommand{\FThT}[9]{\begin{picture}(52,39.5)(-2.9,-2.5)
  \put(-3.4,-3){\includegraphics{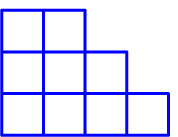}}
  \put( 0,23.3){{\small#8}}\put(11.75,23.3){\small#9}
  \put( 0,11.5){{\small#5}}\put(11.75,11.5){\small#6}
  \put(23.5,11.5){\small#7}
  \put( 0,-0.3){\small#1}\put(11.75,-0.3){\small#2}
  \put(23.5,-0.3){\small#3}\put(35.25,-0.3){\small#4}
 \end{picture}}
\newcommand{\ThT}[5]{\begin{picture}(39.5,27.5)(-2.9,-2.5)
  \put(-3.4,-3){\includegraphics{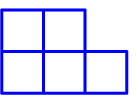}}
  \put( 0,11.5){{\small#4}}\put(11.75,11.5){\small#5}
  \put( 0,-0.3){\small#1}\put(11.75,-0.3){\small#2}
  \put(23.5,-0.3){\small#3}
 \end{picture}}
\newcommand{\ThII}[5]{\begin{picture}(39.5,39.5)(-2.9,-2.5)
  \put(-3.4,-3){\includegraphics{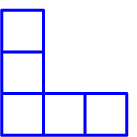}}
  \put( 0,23.3){{\small#5}}
  \put( 0,11.5){{\small#4}}
  \put( 0,-0.3){\small#1}\put(11.75,-0.3){\small#2}
  \put(23.5,-0.3){\small#3}
 \end{picture}}
\newcommand{\dt}{\begin{picture}(0.01,1)\put(-3,-4){\Red{\huge .}}\end{picture}}
\def\Color#1#2{#2}
\def\MyGreen#1{\Color{0 0.6 0}{#1}}
\def\MyMag#1{\Color{.8 0 .8}{#1}}
\def\MyCyan#1{\Color{0 0.2 1}{#1}}
\def\MyMar#1{\Color{0.5 0 0}{#1}}
\newcommand{\defcolor}[1]{\Cerulean{#1}}
\newcommand{\demph}[1]{\defcolor{{\sl #1}}}
\begin{document}


\title[Nonnegativity of Schubert coefficients]{A combinatorial proof
 that Schubert vs.\ Schur coefficients are nonnegative} 

\author[Assaf]{Sami Assaf}
\address{Department of Mathematics, University of Southern California,
  Los Angeles, CA 90089, USA}
\email{shassaf@usc.edu}

\author[Bergeron]{Nantel Bergeron}
\address{Department of Mathematics and Statistics, York University,
  Toronto, Ontario M3J 1P3, Canada}
\email{bergeron@mathstat.yorku.ca}
\thanks{Bergeron was supported in part by CRC and  NSERC}

\author[Sottile]{Frank Sottile}
\address{Department of Mathematics, Texas A\&M University, College
  Station, TX 77843, USA} 
\email{sottile@math.tamu.edu}
\thanks{Sottile was supported in part by  DMS-1001615}

\subjclass[2010]{05E05, 14M15}

\keywords{Grassmannian-Bruhat order, flag manifold,  Grassmannian,
  Littlewood--Richardson rule, Schubert variety, quasisymmetric
  functions, Schur functions, positivity}

\begin{abstract}
  We give a combinatorial proof that the product of a Schubert polynomial 
  by a Schur polynomial is a nonnegative sum of Schubert polynomials.
  Our proof uses Assaf's theory of dual equivalence to show that 
  a quasisymmetric function of Bergeron and Sottile is Schur-positive. 
  By a geometric comparison theorem of Buch and Mihalcea, this implies the nonnegativity of
  Gromov-Witten invariants of the Grassmannian.
\end{abstract}

\maketitle

\dedicatory{Dedicated to the memory of Alain Lascoux}

\section*{Introduction}
\label{sec:introduction}

A Littlewood-Richardson coefficient is the multiplicity of an irreducible representation
of the general linear group in a tensor product of two irreducible representations, and is
thus a nonnegative integer. 
Littlewood and Richardson conjectured a formula for these coefficients
in 1934~\cite{LR1934}, which was proven in the 1970's by Thomas~\cite{Thomas} and
Sch\"utzenberger~\cite{Sc_jdt}.
Since Littlewood-Richardson coefficients may be defined combinatorially as the
coefficients of Schur functions in the expansion of a product of two Schur 
functions, these proofs of the Littlewood-Richardson rule 
furnish combinatorial proofs of the nonnegativity of Schur structure constants.

The Littlewood-Richardson coefficients are also the structure constants for expressing 
products in the cohomology of a Grassmannian in terms of its basis of Schubert
classes. 
Independent of the Littlewood-Richardson rule, these Schubert structure constants are known
to be nonnegative integers through geometric arguments.
The integral cohomology ring of any flag manifold has a Schubert basis 
and again geometry implies that the corresponding Schubert structure constants are
nonnegative integers.
These cohomology rings and their Schubert bases have combinatorial models, and it
remains an open problem to give a combinatorial proof that the
Schubert structure constants are nonnegative.

We give such a combinatorial proof of nonnegativity for a class of Schubert structure
constants in the classical flag manifold.
These are the constants that occur in the product of an arbitrary Schubert
class by one pulled back from a Grassmannian projection.
They are defined combinatorially as the product of a Schubert polynomial~\cite{LS82} by a
Schur symmetric polynomial; we call them \demph{Schubert vs.\ Schur coefficients}.
As the Schubert polynomials form a basis for the ring of polynomials in $z_1,z_2,\dotsc$,
these coefficients determine its structure as a module over polynomials symmetric in
$z_1,\dotsc,z_k$, for any $k$.  
These coefficients were studied by Bergeron and Sottile~\cite{BS_Duke,Monoide,BS_Skew}, who
defined a quasisymmetric generating function associated to intervals in the
Grassmannian-Bruhat order (a partial order on the symmetric group), showed that this
quasisymmetric function is symmetric, and that the  
Schubert vs.\ Schur coefficients are the coefficients of Schur functions in this symmetric 
function.
This work relied upon the Pieri formula~\cite{PieriSchubert}, which has since been given
combinatorial proofs~\cite{K_Pieri,Postnikov}.
Consequently, a combinatorial proof that these quasisymmetric functions are
Schur-positive gives a combinatorial proof of nonnegativity of the Schubert vs.\ Schur
coefficients.

Another important question of combinatorial positivity concerned the Macdonald
polynomials~\cite{Mac95}.
These are symmetric functions with coefficients in the field $\Q(q,t)$ of rational
functions in variables $q,t$,
which were conjectured by Macdonald to have coefficients in $\N[q,t]$ (polynomials in $q,t$
with nonnegative integer coefficients) when expanded in the Schur basis.
Five nearly simultaneous proofs in the mid 1990's showed that these coefficients were in
$\Z[q,t]$~\cite{GR98,GT96,KN98,Kn97,Si96}. 
While positivity was proven by Haiman~\cite{Ha01} using algebraic geometry and
representation theory, a combinatorial proof took longer.
A breakthrough came when Macdonald polynomials were given a combinatorial definition as a
quasisymmetric function with coefficients in $\N[q,t]$~\cite{HHL}.
That work also showed them to have a positive expansion in terms of LLT
polynomials~\cite{LLT}. 
The Macdonald positivity conjecture was finally given a combinatorial proof by
Assaf~\cite{A_JAMS}, who introduced a new technique---dual equivalence graphs---for
proving Schur-positivity of quasisymmetric functions and applied it to the LLT
polynomials. 
Dual equivalence not only shows Schur-positivity, but it also gives a  (admittedly complicated)
combinatorial formula for the Schur coefficients in the quasisymmetric function.

When the  quasisymmetric function comes from descents in a collection of words
the theory of dual equivalence may be recast in terms of a family of involutions on the words~\cite{A_TAMS}.
The critical condition for a dual equivalence only needs to be verified
locally on all subwords of length up to five and some of length six.
The symmetric function of Bergeron and Sottile is the quasisymmetric function associated
to descents on a collection of words that are themselves saturated chains in intervals of
the Grassmannian-Bruhat order.
The results of~\cite{Monoide} lead to a unique family of involutions which immediately
satisfy most properties of a dual equivalence for these chains.
All that remains is to verify the local conditions on chains of length up to six. 
There are only finitely many chains of a given length, up
to an equivalence, and thus we may verify this local condition on a computer.
This shows that the symmetric function of Bergeron and Sottile is Schur-positive and
gives a combinatorial proof that the Schubert vs.\ Schur constants are
nonnegative.  

Identifying the fundamental class of a Schubert variety with a
Schur function identifies the homology of the Grassmannian with a linear subspace in the
algebra of symmetric functions.
Under this identification, each symmetric function of Bergeron and Sottile is the
fundamental cycle of the image in the Grassmannian of a Richardson variety (intersection
of two Schubert varieties) under the projection from the flag variety.
These Richardson images are also known intrinsically as positroid varieties~\cite{KLS}.
Buch, Chaput, Mihalcea, and Perrin~\cite{BCMP} showed that each structure constant in the
quantum cohomology of the Grassmannian~\cite{Bertram} (quantum Littlewood-Richardson
numbers) naturally arises when 
expressing a certain projected Richardson class in the Schubert basis of the homology of the
Grassmannian, and is thus naturally a Schubert vs.\ Schur coefficient.

Our combinatorial proof of positivity of Schubert vs.\ Schur coefficients does 
not yield a combinatorial proof of positivity of the quantum
Littlewood-Richardson coefficients (the comparison theorem in~\cite{BCMP} is geometric),
but it does give a (complicated) combinatorial formula for those coefficients.
Recent work of Buch, Kresch, Purbhoo, and Tamvakis~\cite{BKPT} giving 
a combinatorial formula for the structure constants of two-step flag manifolds also gives
a combinatorial formula for the quantum Littlewood-Richardson
coefficients as each is equal to some Schubert structure constant on a two-step flag
manifold~\cite{BKT03}. 
This however does not give a combinatorial proof of nonnegativity, as the comparison
theorem in~\cite{BKT03} is geometric.

This paper is organized as follows.
Section~\ref{S:one} develops background material of quasisymmetric functions, Assaf's dual 
equivalence, and the work of Bergeron and Sottile.
We also state our main theorem (Theorem~\ref{Th:Main}) in Section~\ref{S:one}. 
In Section~\ref{S:two} we show that there is at most one dual equivalence on the set
of chains in intervals in the Grassmannian-Bruhat order satisfying some natural properties
(Lemma~\ref{L:locality}), and then we prove that this is a dual equivalence
(Theorem~\ref{Th:DualEquivalance}), which implies our main result.

\section{Quasisymmetric functions, dual equivalence, and Schubert coefficients}\label{S:one}

We collect here background on quasisymmetric functions, recall the salient parts of
Assaf's theory of dual equivalence, define the symmetric function of Bergeron and
Sottile, and connect it to the Schubert vs.\ Schur coefficients.

\subsection{Symmetric functions and tableaux}

The algebra \defcolor{$\Lambda$} of symmetric functions is freely generated by the
complete symmetric 
functions $h_1,h_2,\dotsc$, where $h_m$ is the formal sum of all monomials of degree $m$ in the
countably many variables $x_1,x_2,\dotsc$.
Thus $\Lambda$ has a basis of monomials in the $h_m$.
More interesting is its basis of Schur functions $s_\lambda$, which are indexed by
partitions $\lambda$ and are generating functions for Young tableaux.

A \demph{partition $\lambda$} is a weakly decreasing finite sequence of integers
$\lambda\colon\lambda_1\geq\dotsb\geq\lambda_k\geq0$.
Write $|\lambda|$ for the sum $\lambda_1+\dotsb+\lambda_k$.
We represent a partition $\lambda$ by its \demph{Young diagram}, which is the
left-justified array of boxes with $\lambda_i$ boxes at height $i$. 
Thus
\[
   (3,2)\ \longleftrightarrow\ 
     \raisebox{-6pt}{\includegraphics{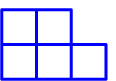}}
   \qquad\mbox{and}\qquad
   (5,4,2,1)\ \longleftrightarrow\ 
     \raisebox{-16pt}{\includegraphics{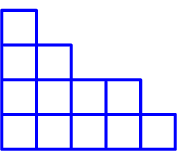}}\,.
\]
A \demph{Young tableau} is a filling of $\lambda$ with positive integers that weakly
increase across each row and strictly increase up each column.
It is \demph{standard} if the integers are $1,2,\dotsc,|\lambda|$.
Here are four Young tableaux of shape $(4,3,2)$.
Only the last is standard.
 \begin{equation}\label{Eq:432Tableaux}
   \raisebox{-15pt}{\FThT{1}{1}{2}{2}{2}{3}{3}{4}{4}\qquad
   \FThT{1}{1}{1}{2}{2}{3}{4}{4}{5}\qquad
   \FThT{1}{1}{4}{4}{2}{5}{8}{7}{8}\qquad
   \FThT{1}{2}{4}{5}{3}{6}{8}{7}{9}}
 \end{equation}

To a tableau $T$ of shape $\lambda$, we associate a monomial
\[
    \defcolor{x^T}\ :=\ \prod_{i\in T} x_i\,,
\]
the product is over all entries $i$ of $T$.
The tableaux in~\eqref{Eq:432Tableaux} give the monomials
\[
   x_1^2x_2^3x_3^2x_4^2\,,\ 
   x_1^3x_2^2x_3x_4^2x_5\,,\ 
   x_1^2x_2x_4^2x_5x_7x_8^2\,,\ 
   \ \mbox{and}\ \ 
   x_1x_2x_3x_4x_5x_6x_7x_8x_9\,,
\]
respectively.
The Schur function $s_\lambda$ is the generating function for tableaux of shape
$\lambda$,
\[
   s_\lambda\ =\ \sum_T x^T\,,
\]
the formal sum over all tableaux $T$ of shape $\lambda$.

\subsection{Quasisymmetric functions}

Gessel's quasisymmetric functions arise as generating functions for enumerative combinatorial
invariants~\cite{Gessel}.
A formal power series $F(x)$ in countably many variables $x_1,x_2,\dotsc$ 
having bounded degree is \demph{quasisymmetric} if for any list of positive integers
$(\alpha_1,\dotsc,\alpha_n)$ and increasing sequence of positive integers
$i_1<\dotsb<i_n$, the coefficient of the monomial
 \[
   x_{i_1}^{\alpha_1} x_{i_2}^{\alpha_2} \dotsb  x_{i_n}^{\alpha_n} 
 \]
in $F(x)$ does not depend upon the choice of $i_1<\dotsb<i_n$.
For example, 
\[
   2x_1^2x_2+2x_1^2x_3+2x_2^2x_3 + \dotsb\quad
   -x_1x_2x_3-x_1x_2x_4-x_1x_3x_4-x_2x_3x_4-\dotsb
\]
is quasisymmetric.

A fundamental quasisymmetric function \defcolor{$Q_D(x)$} is given by a positive integer
$n$ and a subset $D$ of $\defcolor{[n{-}1]}:=\{1,\dotsc,n{-}1\}$ and defined to be 
\[
   Q_D(x)\ :=\ \sum_{\substack{i_1 \leq \cdots \leq i_n \\ j \in D
      \Rightarrow i_j < i_{j+1}}} x_{i_1} \cdots x_{i_n}\, .
\]
The degree $n$ is implicit in our notation.
These form a basis of quasisymmetric functions. 

Given a set $\calC$ of combinatorial objects and a map $\Des$ 
from $\calC$ to the subsets of $[n{-}1]$, we define the
quasisymmetric generating function of $(\calC,\Des)$,
\[
   \defcolor{K_{(\calC,\Des)}}\ =\ \defcolor{K_{\calC}}\ 
    :=\ \sum_{c\in\calC} Q_{\Des(c)}(x)\,.
\]
A source for this is when $\calC$ is a set of (saturated) chains in a
labeled poset of rank $n$.
Here, a labeled poset is a finite ranked poset $P$ together with an
integer label on each cover of $P$.
Given a chain $c$ in $P$, the sequence of labels of its covers is a word $w$ of
length $n$, and we let $\Des(c):=\{i\in[n{-}1]\mid w_i>w_{i+1}\}$, the descent
set of $w$.

A map $f\colon P\to Q$ of finite labeled posets is a \demph{label-equivalence} if
$f$ is an isomorphism of posets and if the labels in $P$ occur in the same relative
order as the labels in $Q$. 
That is, if for any two covers $u\lessdot v$ and 
$x\lessdot y$ in $P$ with labels $a$ and $b$, respectively, if $\alpha$ and $\beta$ are
the labels of the corresponding covers $f(u)\lessdot f(v)$ and 
$f(x)\lessdot f(y)$ in $Q$, then $a\leq b$ if and only if $\alpha\leq\beta$. 
Label-equivalent posets have identical quasisymmetric functions.

\begin{example}\label{Ex:YT}
 Partitions are partially ordered by containment of their Young diagrams and the
 resulting poset is Young's lattice.
 A cover is given by adding a box in row $i$ and column $j$ and is labeled 
 with $j{-}i$ to obtain a labeled poset.
 Standard tableaux of shape $\lambda$ correspond to saturated chains from $\emptyset$ to
 $\lambda$:  
 the box containing the integer $n$ is the box corresponding to the $n$th cover in the
 chain.
 Figure~\ref{F:YL} shows part of Young's lattice.
\begin{figure}[htb]
  \begin{picture}(169,208)(0,7)
   \put(0,10){\includegraphics{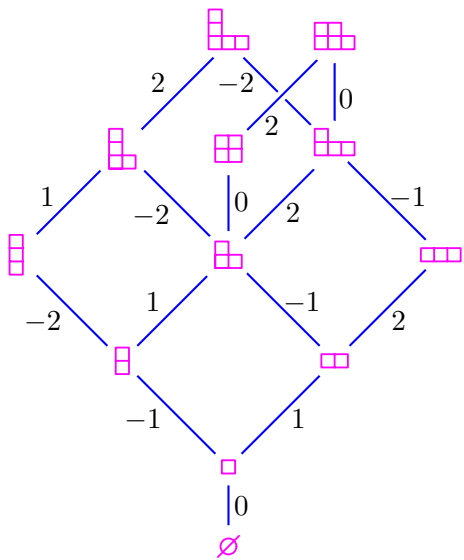}}

   \put( 54,183){\small$2$} \put(79,183){\small$-2$}
   \put( 97,167){\small$2$} \put(125,177){\small$0$}

   \put( 12,140){\small$1$} \put(47,133){\small$-2$} \put(85.5,138){\small$0$}
   \put(105,134){\small$2$} \put(144,140){\small$-1$}

   \put( 6,93){\small$-2$}  \put(52,100){\small$1$}
   \put(104,100){\small$-1$} \put(145,93){\small$2$}

   \put(44,56){\small$-1$} \put(107,56){\small$1$}

   \put(85.5,23){\small$0$}

  \end{picture} 
 \caption{Part of Young's Lattice}\label{F:YL}
\end{figure}

 We display the five Young tableaux of shape $(3,2)$.
 Below each, we give the sequence of labels in the corresponding chain, writing $\bar{a}$ for
 $-a$ and placing a dot at each descent.
 \begin{equation}\label{Eq:Tab32}
  \begin{tabular}{ccccccccc}
    \ThT{1}{2}{5}{3}{4}&\quad& 
    \ThT{1}{3}{5}{2}{4}&\quad&  
    \ThT{1}{3}{4}{2}{5}&\quad&
    \ThT{1}{2}{4}{3}{5}&\quad& 
    \ThT{1}{2}{3}{4}{5}\\ 
     {0}{1}\dt{\={1}}{0}{2}&&
     {0}\dt{\={1}}{1}\dt{0}{2}&&
     {0}\dt{\={1}}{1}{2}\dt{0}&&
     {0}{1}\dt{\={1}}{2}\dt{0}&&
     {0}{1}{2}\dt{\={1}}{0}
  \end{tabular}
 \end{equation}

 We display the six Young tableaux of shape $(3,1,1)$, together with the 
 sequence of labels in the corresponding chain and descent sets.
 \begin{equation}\label{Eq:Tab311}
  \begin{tabular}{ccccccccccc}
    \ThII{1}{2}{3}{4}{5}&\quad& 
    \ThII{1}{2}{4}{3}{5}&\quad&
    \ThII{1}{2}{5}{3}{4}&\quad& 
    \ThII{1}{3}{4}{2}{5}&\quad& 
    \ThII{1}{3}{5}{2}{4}&\quad& 
    \ThII{1}{4}{5}{2}{3}\\ 
     {0}{1}{2}\dt{\={1}}\dt{\={2}}&&
     {0}{1}\dt{\={1}}{2}\dt{\={2}}&&
     {0}{1}\dt{\={1}}\dt{\={2}}{2}&&
     {0}\dt{\={1}}{1}{2}\dt{\={2}}&&
     {0}\dt{\={1}}{1}\dt{\={2}}{2}&&
     {0}\dt{\={1}}\dt{\={2}}{1}{2}
  \end{tabular}
 \end{equation}
\end{example}

A standard young tableau $T$ has a \demph{descent at $i$} when $i{+}1$ is above $i$ in
$T$, equivalently when $i{+}1$ is weakly left of $i$.
(These are the descents we observe in the previous two examples.)
The quasisymmetric function associated to a partition $\lambda$ is 
\[
   \defcolor{K_\lambda}\ :=\ \sum_{T} Q_{\Des(T)}\,,
\]
the sum over all standard Young tableaux $T$ where $\Des(T)$ is the descent set of the
chain in Young's lattice corresponding to $T$.
Gessel~\cite{Gessel} showed that $K_\lambda=s_\lambda$, the Schur function associated to
$\lambda$.
From~\eqref{Eq:Tab32} and~\eqref{Eq:Tab311}, we have
 \begin{eqnarray*}
   s_{(3,2)}&=& 
   Q_{\{3\}}\ +\ 
   Q_{\{2,4\}}\ +\ 
   Q_{\{1,4\}}\ +\ 
   Q_{\{1,3\}}\ +\ 
   Q_{\{2\}}\hspace{50pt}\mbox{and}\\
   s_{(3,1,1)}&=&
   Q_{\{3,4\}}\ +\ 
   Q_{\{2,4\}}\ +\ 
   Q_{\{2,3\}}\ +\ 
   Q_{\{1,4\}}\ +\ 
   Q_{\{1,3\}}\ +\ 
   Q_{\{1,2\}}\,.
 \end{eqnarray*}

\begin{example}\label{Ex:143256}
 Consider the interval $[e,(145326)]_\preceq$ in the Grassmannian-Bruhat
 order~\cite{Monoide}. 
 An edge is a  transposition $t_{ab}=(a,b)$ with $a<b$, which is given the label is $b$.
 \begin{figure}[htb]
  \begin{picture}(253,225)(-125,0)
   \put(-100,0){\includegraphics{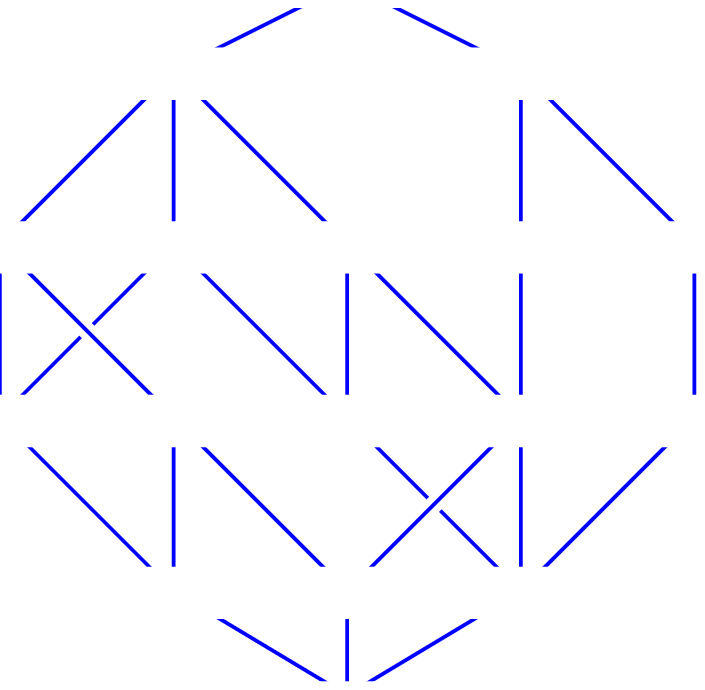}}

   \put(-23,212){$(145326)$}

   \put(-30,205){\smaller$4$}\put(30,205){\smaller$4$}

   \put(-77,185){$(1326)(45)$}   \put(29,185){$(26453)$}

   \put(-87,163){\smaller$3$}\put(-57,163){\smaller$5$}\put(-21,163){\smaller$3$}
                   \put(42,163){\smaller$6$}\put(80,163){\smaller$5$}

   \put(-123,136){$(126)(45)$} \put(-66,136){$(1326)$} \put(-23,136){$(263)(45)$}
         \put(34,136){$(2453)$}   \put(84,136){$(2653)$}

   \put(-108,113){\smaller$5$}\put(-92,119){\smaller$2$}\put(-65,118){\smaller$3$}
    \put(-21,113){\smaller$3$}\put(2,113){\smaller$5$}\put(29,113){\smaller$6$}
                   \put(52,113){\smaller$4$}\put(102,113){\smaller$6$}

   \put(-115,86){$(126)$} \put(-69,86){$(26)(45)$} \put(-13,86){$(263)$}
         \put(29,86){$(23)(45)$}   \put(87,86){$(253)$}

   \put(-85,61){\smaller$2$}\put(-58,67){\smaller$5$}\put(-21,63){\smaller$6$}
                   \put(10,66){\smaller$6$}\put(33,65){\smaller$3$}
                   \put(52,64){\smaller$5$}\put(81,62){\smaller$5$}

   \put(-60,36){$(26)$}\put(-10,36){$(45)$}\put(40,36){$(23)$}

   \put(-33,17){\smaller$6$}\put(-7,23){\smaller$5$}\put(27,17){\smaller$3$}

   \put(-3,7){$e$}

  \end{picture}
 \caption{A labeled poset}\label{F:145326}
 \end{figure}
This poset has eleven chains.
Here is the rightmost
\[
   e\;\xrightarrow{\;3\;}\;
   (23)\;\xrightarrow{\;5\;}\;
   (253)\;\xrightarrow{\;6\;}\;
   (2653)\;\xrightarrow{\;5\;}\;
   (26453)\;\xrightarrow{\;4\;}\;
   (145326)\,.
\]
Its sequence of labels is $35654$, which has descent set $\{3,4\}$.
We list the labels of the chains in this poset, placing a dot under each descent.
 \begin{equation}\label{Eq:ElevenChainLabels}
  \begin{array}{c}
    356\dt34\,,\ 
    36\dt35\dt4\,,\ 
    6\dt235\dt4\,,\ 
    6\dt25\dt34\,,\ 
    56\dt234\,,
   \\
    356\dt5\dt4\,,\ 
    35\dt46\dt4\,,\ 
    36\dt5\dt34\,,\ 
    5\dt346\dt4\,,\ 
    5\dt36\dt34\,,\ 
    6\dt5\dt234\,.\rule{0pt}{13pt}
  \end{array}
 \end{equation}
Thus the quasisymmetric function of this interval is
\[
  Q_{\{3,4\}}\ +\ 
  2Q_{\{2,4\}}\ +\ 
  Q_{\{2,3\}}\ +\ 
  2Q_{\{1,4\}}\ +\ 
  Q_{\{3\}}\ +\ 
  2Q_{\{1,3\}}\ +\ 
  Q_{\{1,2\}}\ +\ 
  Q_{\{2\}}\,,
\]
which is $s_{(3,2)}+s_{(3,1,1)}$.
\hfill\qed
\end{example}

\subsection{Dual Equivalence}

Assaf's theory of dual equivalence~\cite{A_TAMS,A_JAMS} is a general framework for proving
Schur-positivity of quasisymmetric generating functions. 

\begin{definition}\label{D:strong_dual_equivalence}
Let $\calC$ be a finite set, $n$ a positive integer, and $\Des$ a map from $\calC$ to the
subsets of $[n{-}1]$ where, for $c \in \calC$, if $i\in\Des(c)$, we say that $c$ has a
\demph{descent at $i$}.
A \demph{strong dual equivalence} for $\calC$ (or for $(\calC,\Des)$) is a collection
$\{\varphi_i\mid i=2,\dotsc,n{-}1\}$ of involutions on $\calC$ which satisfy the following
conditions.
\begin{enumerate}
 \item[(i)] The fixed points 
   of the involution $\varphi_i$
   are those $c \in \calC$ which either have a descent at both $i{-}1$ and $i$ or do not have a
   descent at either $i{-}1$ or at $i$. 

 \item[(ii)] For elements $c\in\calC$ with $c\neq\varphi_i(c)$, so that $c$ has exactly
   one descent in $\{i{-}1,i\}$, 
  \begin{enumerate}
   \item $c$ and $\varphi_i(c)$ have the same descents, except possibly in 
     $\{i{-}2,i{-}1,i,i{+}1\}$.

   \item For each $j\in\{i{-}1,i\}$ exactly one of $c$ and $\varphi_i(c)$ has a descent at
     $j$. 
 
   \item If exactly one of $c$ and $\varphi_i(c)$ has a descent at $i{-}2$, then 
        $c\neq \varphi_{i-1}(c)$.

   \item If exactly one of $c$ and $\varphi_i(c)$ has a descent at $i{+}1$, then
           $c\neq \varphi_{i+1}(c)$.
  \end{enumerate}
 
 \item[(iii)] If $|i-j|\geq 3$, then $\varphi_i$ and $\varphi_j$ commute.

 \item[(iv)] 
      For any $i<j\leq i{+}3$, if $b=\varphi_{i_\ell}\circ\dotsb\circ\varphi_{i_1}(c)$ for
      indices $i_1,\dotsc,i_\ell$ in the interval $[i,j]$ with $\ell>0$, then there exist
      indices $j_1,\dotsc,j_m$ in the interval $[i,j]$ with $m>0$ and at most one $j_k=j$
      such that $b=\varphi_{j_m}\circ\dotsb\circ\varphi_{j_1}(c)$.

\end{enumerate}
\end{definition}

When $\calC$ is the set of standard Young tableaux $T$ with shape $\lambda$ and $\Des(T)$
the descent set defined after Example~\ref{Ex:YT}, Haiman's notion
of dual equivalence~\cite{Ha92} gives a strong dual equivalence on $\calC$.
Let $T$ be a standard tableau with entries $1,\dotsc,n$.
For each $i=2,\dotsc,n{-}1$ define $\varphi_i(T)$ by the relative positions of
$i{-}1$, $i$, and $i{+}1$ in $T$.
If they are in order left-to-right, so that $T$ has no descents in position $\{i{-}1,i\}$,
we set $\varphi_i(T)=T$.
If $i{+}1$ is in a row above $i$, which is  in a row above $i{-}1$, then $T$ has a descent at
both $i{-}1$ and $i$, and we also set  $\varphi_i(T)=T$.
Otherwise, either $i$ is above both $i{-}1$ and $i{+}1$ or else it is below or to the
right of both. 
In either case, let  $\varphi_i(T)$ be the tableau obtained from $T$ by switching $i$ with
whichever of $i{-}1$ or $i{+}1$ that is further away from it in $T$.

We illustrate this for the partition $(3,2)$, displaying the descent set below each
tableau. 
 \begin{equation}\label{Eq:32_DE}
   \raisebox{-18pt}{\begin{picture}(335,36)(0,-12)
    \put(  0,0){\ThT{1}{2}{5}{3}{4}}
     \put( 8.6,-12){$\{2\}$}
     \put(40,12){$\xleftrightarrow{\;\ \varphi_3\ \;}$}
     \put(40, 6){$\xleftrightarrow[\;\ \varphi_2\ \;]{}$}
    \put( 75,0){\ThT{1}{3}{5}{2}{4}}
     \put( 78,-12){$\{1,3\}$}
     \put(115,9){$\xleftrightarrow{\;\ \varphi_4\ \;}$}
    \put(150,0){\ThT{1}{3}{4}{2}{5}}
     \put(153,-12){$\{1,4\}$}
     \put(190,9){$\xleftrightarrow{\;\ \varphi_2\ \;}$}
    \put(225,0){\ThT{1}{2}{4}{3}{5}}
     \put(228,-12){$\{2,4\}$}
     \put(265,12){$\xleftrightarrow{\;\ \varphi_4\ \;}$}
     \put(265, 6){$\xleftrightarrow[\;\ \varphi_3\ \;]{}$}
    \put(300,0){\ThT{1}{2}{3}{4}{5}}
     \put(308.6,-12){$\{3\}$}
  \end{picture}}
 \end{equation}

\demph{Strong dual equivalence} is the relation \defcolor{$\sim$} on $\calC$ generated by 
$c\sim\varphi_i(c)$, for $c\in\calC$ and $i=2,\dotsc,n{-}1$, when the $\varphi_i$
satisfy the conditions of Definition~\ref{D:strong_dual_equivalence}.
A main result of~\cite{A_JAMS} is that if $\calA$ is a strong dual equivalence class, then
there is a partition $\lambda$ such that 
 \begin{equation}\label{Eq:DE_Schur_Coeff}
   K_\calA\ =\ \sum_{a\in\calA} Q_{\Des(a)}\ =\ s_\lambda\,.
 \end{equation}
This leads to a combinatorial formula (which we do not state) for the Schur
coefficients $c^\lambda_{\calC}$ defined by the identity
\[
   K_{(\calC,\Des)}\ =\ \sum_\lambda c^\lambda_{\calC} s_\lambda\,,
\]
where $(\calC,\Des)$ as above admits a strong dual equivalence structure.

Condition (iv) for a strong dual equivalence is hard to satisfy and to check.
Assaf introduced a weaker notion which implies that the quasisymmetric generating function of
each equivalence class is Schur-positive, but not necessarily equal to a single Schur
function.

Suppose that we have a set $\calC$ and a notion of descent $\Des$ for elements of $\calC$ and
involutions $\varphi_i$ for $i=2,\dotsc,n{-}1$ as above.
Given $2\leq i<j\leq n{-}1$, we may restrict $\Des$ and the equivalence relation $\sim$ to
the interval $[i,j]$ as follows.
For $c\in\calC$, define $\defcolor{\Des_{(i,j)}}(c)\subset\{1,\dotsc,j{-}i{+}2\}$ by first
intersecting $\Des(c)$ with the interval $[i{-}1,j]$ and then subtracting $i{-}2$
from each element.
Similarly, \defcolor{$\sim_{(i,j)}$} is the coarsest equivalence relation on $\calC$ in which
$c\sim_{(i,j)}\varphi_k(c)$ where $i\leq k\leq j$.
Write \defcolor{$[c]_{(i,j)}$} for the $\sim_{(i,j)}$-equivalence class containing
$c\in\calC$.

We need the following rather technical definition.
A list $c_1,\dotsc,c_{2r}$ of distinct elements of $\calC$ is a 
\demph{flat $i$-chain} if it
satisfies the following conditions:
\begin{enumerate}
  \item No $c_j$ is fixed by either $\varphi_{i-2}$ or $\varphi_i$ and each of  
        $c_3,c_5,\dotsc,c_{2r-1}$ is fixed by $\varphi_{i-1}$, 
  \item for each $j=1,\dotsc,r$, we have that $\varphi_{i}(c_{2j-1})=c_{2j}$, and 
  \item for each $j=1,\dotsc,r{-}1$ we have that $c_{2j+1}$ is equal to 
         $(\varphi_{i-2}\circ\varphi_{i-1})^t\circ\varphi_{i-2}(c_{2j})$, for some $t\geq
         0$. 
\end{enumerate}
 
\begin{definition}\label{D:dual_equivalence}
  Suppose that we have a family of combinatorial objects $\calC$, an integer $n$, and a
  descent statistic $\Des$ as before.
  A family of involutions $\varphi_i$ for $2\leq i\leq n{-}1$ on $\calC$ is a 
  \demph{dual equivalence} if it satisfies Conditions (i), (ii), and (iii) of
  Definition~\ref{D:strong_dual_equivalence} and if the following three conditions hold.
 \begin{enumerate}
  \item[(iv.a)]  For any $i<j\leq i+2$ and $c\in\calC$, the restricted generating function 
\[
     \sum_{a\in [c]_{(i,j)}} Q_{\Des_{(i,j)}(a)}
\]
     is symmetric and Schur-positive.

  \item[(iv.b)]  For every $3\leq i\leq n{-}1$ and $c\in\calC$ that is not fixed by
    $\varphi_i$ for which neither $c$ nor $\varphi_i(c)$ is  fixed by $\varphi_{i-1}$ or
    by $\varphi_{i+1}$, the quasisymmetric functions $Q_{[c]_{(i-1,i)}}$ and
    $Q_{[c]_{(i,i+1)}}$ are equal. 
 
  \item[(iv.c)]  For each $3<i<n{-}1$ and every flat $i$-chain $c_1,\dotsc,c_{2r}$, 
    if for some $j$ with $1<j<r$ we have that neither $c_{2j-1}$  nor $c_{2j}$ is fixed by 
    $\varphi_{i+1}$, then either none of $c_1,...,c_{2j}$ are fixed by $\varphi_{i+1}$ or
    none of $c_{2j-1},c_{2j},..., c_{2r}$ are fixed by $\varphi_{i+1}$.

    We also require the symmetric statement given by reversal, replacing $i$ by $n{-}i$.

 \end{enumerate}
\end{definition}

We state the main result of~\cite{A_TAMS}.

\begin{theorem}[\cite{A_TAMS}, Theorem~5.3]\label{Th:WDE_SP}
   If\/ $\{\varphi_i\mid 1<i<n\}$ is a dual equivalence for $(\calC,\Des)$, then the
   generating function $K_{(\calC,\Des)}$ is symmetric and Schur-positive. 
\end{theorem}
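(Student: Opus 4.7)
The plan is to reduce the statement to a claim about individual $\sim$-equivalence classes. Since Schur functions are linearly independent and since $\calC$ partitions into $\sim$-equivalence classes, we may write $K_{(\calC,\Des)} = \sum_{[c]} K_{[c]}$ where the sum ranges over the $\sim$-classes and $K_{[c]} = \sum_{a \in [c]} Q_{\Des(a)}$. It therefore suffices to prove that each $K_{[c]}$ is symmetric and Schur-positive.

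For symmetry, I would exploit the fact that a quasisymmetric function is symmetric precisely when its coefficients in the fundamental basis are invariant under transposing adjacent positions in descent sets in the appropriate sense. This is exactly what the involutions $\varphi_i$ are designed to encode at each interior index: condition (ii) says that $\varphi_i$ trades a descent in $\{i{-}1, i\}$ for a descent in the other position while fixing all other descents, and condition (i) handles the fixed points correctly. Combined with the commutation relation (iii), the involutions $\varphi_i$ generate an action that mimics the action of the symmetric group on adjacent descent positions. The local axiom (iv.a) strengthens this to full symmetry (and indeed Schur-positivity) of any restricted generating function over windows of size at most three.

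The next step is to globalize the local Schur-positivity to each full class $[c]$. Here I would try to match each element of $[c]$ with a standard Young tableau whose descent set agrees, by iteratively gluing the local decompositions guaranteed by (iv.a). Condition (iv.b) furnishes the compatibility needed for this gluing: two overlapping windows centered at $(i{-}1,i)$ and $(i,i{+}1)$ produce the same restricted quasisymmetric function on a shared element, so the local tableau-labels can be propagated consistently from one window to the next. Condition (iv.c), concerning flat $i$-chains, excludes pathological configurations in which the pattern of $\varphi_{i+1}$-fixed points along a chain would force two incompatible local Schur decompositions; this is exactly the obstruction that must be ruled out for the gluing to terminate coherently. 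I would likely induct on $n$, using (iv.b) and (iv.c) to extend a dual equivalence structure known on the first $n{-}1$ letters to one on all $n$ letters, appealing at the base case to the strong dual equivalence on standard Young tableaux from Haiman.

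The main obstacle will be this globalization. The axioms constrain only behavior on indices within a small window, whereas Schur-positivity of $K_{[c]}$ is a global statement. I expect the bulk of the argument to consist of a careful bookkeeping showing that no inconsistent local decompositions can coexist in a single class---precisely the role played by the technical flat $i$-chain hypothesis (iv.c) and its reversal. A clean way to package the induction is probably via an explicit morphism of dual equivalence graphs from $[c]$ to a disjoint union of strong dual equivalence classes on standard Young tableaux; verifying that (iv.a)--(iv.c) suffice for the existence of such a morphism is where the real work lies.
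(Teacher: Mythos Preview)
This theorem is not proved in the present paper; it is quoted from~\cite{A_TAMS}, and the only information the paper gives about its proof is the remark immediately following it: the argument in~\cite{A_TAMS} is an explicit algorithm that transforms a dual equivalence into a \emph{strong} dual equivalence, with Conditions~(iv.b) and~(iv.c) being precisely what the algorithm needs to run. Once one has a strong dual equivalence, each class contributes a single Schur function by~\eqref{Eq:DE_Schur_Coeff}, and Schur-positivity follows.

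Your outline points in the same direction---your proposed ``morphism of dual equivalence graphs from $[c]$ to a disjoint union of strong dual equivalence classes on standard Young tableaux'' is essentially a restatement of this transformation---so at the level of strategy you are aligned with the cited proof. However, what you have written is a plan rather than a proof: you explicitly concede that ``verifying that (iv.a)--(iv.c) suffice for the existence of such a morphism is where the real work lies,'' and none of that work is carried out. The substance of~\cite{A_TAMS} is exactly the construction of this transformation algorithm and the verification that it terminates correctly, which is lengthy and delicate; your sketch does not supply any of it.

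There is also a concrete error in your symmetry paragraph. You write that $\varphi_i$ ``trades a descent in $\{i{-}1,i\}$ for a descent in the other position while fixing all other descents.'' That is not what Condition~(ii.a) says: it allows $\Des(c)$ and $\Des(\varphi_i(c))$ to differ anywhere in $\{i{-}2,i{-}1,i,i{+}1\}$, so $\varphi_i$ need not simply swap the descent at $i{-}1$ with one at $i$. Consequently the $\varphi_i$ do not obviously generate an $S_{n-1}$-action permuting descent positions, and symmetry of $K_{[c]}$ cannot be read off from~(i)--(iii) alone in the way you suggest. In the actual argument, symmetry is a byproduct of the reduction to strong dual equivalence, not a separate preliminary step.
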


\begin{remark}\label{R:WDE_Combinatorial}
 The proof in~\cite{A_TAMS} invokes an algorithm to transform a dual
 equivalence into a strong dual equivalence, yielding an explicit, albeit complicated,
 combinatorial formula for the Schur coefficients in $K_{(\calC,\Des)}$ based on the
 description given for the Schur coefficients of a strong dual
 equivalence. 
 Conditions (iv.b) and (iv.c) are needed for the transformation algorithm.
 They will be explained in Subsection~\ref{SS:iv.c} in simple graphical terms.
\end{remark}

\begin{remark}\label{R:local}
 These conditions for a dual equivalence are local in that they depend only on the
 involutions $\varphi_i,\varphi_j$ for $|i{-}j|\leq 3$.
 This will enable us to reduce their verification to a computer check.
\end{remark}

This definition of a dual equivalence is seemingly more restrictive than given 
in~\cite{A_TAMS}; there, the second assertion about reversal symmetry in (iv.c) was not
invoked. 
While this additional condition can be shown to follow from the others, that is not
necessary for our purposes, for Theorem~\ref{Th:WDE_SP} (whose proof did not use this 
symmetry assertion) remains valid.

\begin{remark}\label{R:symmetry}
 Given a set $(\calC,\Des)$ with $\calC$ finite, $\Des(c)\subset[n{-}1]$ for $c\in\calC$,
 and involutions $\varphi_i$ on $\calC$ for $2\leq i\leq n{-}1$ that form a dual
 equivalence, we may construct three additional dual equivalences as follows.
 Let $\omega$ be the map that takes a subset of $[n{-}1]$ to its complement.
 Thus $i\in\omega(D)\Leftrightarrow i\not\in D$.
 The definition of a dual equivalence implies that the involutions
 $\varphi_i$ form a dual equivalence for $(\calC,\omega\circ\Des)$.

 Let $\rho$ be the involution on subsets $D$ of $[n{-}1]$ that reverses a subset
 \[
    \rho(D)\ :=\ \{n{-}j\mid j\in D\}\,.
 \]
 If we define $\psi_i:=\varphi_{n+1-i}$ for $2\leq i\leq n{-}1$, then 
 the definition of a dual equivalence implies 
 that these involutions $\psi_i$ form a dual 
 equivalence for $(\calC,\rho\circ\Des)$.
 This requires the symmetry assertion from (iv.c).

 We may compose these to get a third additional dual equivalence: the involutions
 $\psi_i$ form a dual equivalence for $(\calC,\rho\circ\omega\circ\Des)$.
\end{remark}

Given a finite set $(\calC,\Des)$ equipped with a notion of descent
$\Des(c)\subset[n{-}1]$ for $c\in\calC$ and a collection of involutions $\varphi_i$ on
$\calC$ for $2\leq i\leq n{-}1$, construct a colored graph $\calG_\calC$ with
vertex set $C$ which has an edge of color $i$ between $c$ and $\varphi_i(c)$ when
$c\neq\varphi_i(c)$. 
When the involutions $\varphi_i$ form a dual equivalence, $\calG_\calC$ is a 
\demph{dual equivalence graph}.

For example, here are the dual equivalence graphs for the partitions $(3,2)$
(from~\eqref{Eq:32_DE}) and $(3,1,1)$, which give the symmetric functions $s_{(3,2)}$ and
$s_{(3,1,1)}$, respectively.
 \begin{equation}\label{Eq:first_DEG}
  \begin{picture}(122,20)
   \put(0,7){\includegraphics{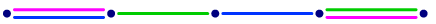}}

   \put( 14, 1){\MyCyan{\scriptsize$2$}}
   \put( 14,13){\MyMag{\scriptsize$3$}}
   \put( 44,12){\MyGreen{\scriptsize$4$}}
   \put( 74,12){\MyCyan{\scriptsize$2$}}
   \put(104,13){\MyGreen{\scriptsize$4$}}
   \put(104, 1){\MyMag{\scriptsize$3$}}
  \end{picture}
  \qquad \qquad
  \begin{picture}(122,25)(0,2)
   \put(0,2){\includegraphics{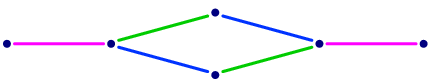}}

   \put( 14,14.5){\MyMag{\scriptsize$3$}}
   \put( 42,19){\MyGreen{\scriptsize$4$}}
   \put( 42, 1){\MyCyan{\scriptsize$2$}}
   \put( 75,19.5){\MyCyan{\scriptsize$2$}}
   \put( 75, 1){\MyGreen{\scriptsize$4$}}
   \put(104,14.5){\MyMag{\scriptsize$3$}}
  \end{picture}
 \end{equation}
Interchanging the labels $2\leftrightarrow 4$ is an automorphism
of both graphs. 
This may be understood from the interplay of the involution $\rho$ with quasisymmetric
functions.

Consider two involutions on the algebra of quasisymmetric functions.
The first, $\rho$, comes from reversing the alphabet. 
If $X=x_1<x_2<\dotsb$ and $\rho(X):=x_1>x_2>\dotsb$ is its reversal, then 
\[
   \rho(Q_D)(X)\ :=\ Q_D(\rho(X))\ =\ Q_{\rho(D)}(X)\,.
\]
(Here, we are using that the definition of quasisymmetric function relies on a total
ordering of the variables.)
Reversal restricts to the identity on symmetric functions.
Thus if $(\calC,\Des)$ admits a dual equivalence structure then
\[
   K_{(\calC,\Des)}\ =\ \rho(K_{(\calC,\Des)})\ =\ K_{(\calC,\rho\circ\Des)}\,,
\]
 where the first equality is because $K_{(\calC,\Des)}$ is symmetric and the second follows from the
 definitions of $\rho$.

 Similarly, the complementation map $\omega$ on subsets $D$ of $[n{-}1]$ induces an involution, 
 also written $\omega$, on quasisymmetric functions,
\[   
   \omega(Q_D)\ :=\ Q_{\omega(D)}\,.
\]
 This restricts to the fundamental involution on the algebra of symmetric functions,
\[
  \omega(s_\lambda)\ =\ s_{\lambda'}\,,
\]
where \defcolor{$\lambda'$} is the matrix transpose (interchanging rows with columns) of the
partition $\lambda$.
   
We suppressed the descent sets in these graphs~\eqref{Eq:first_DEG}.
This results in only a mild ambiguity.

\begin{lemma}\label{L:descentsDetermined}
 The edges in a connected component of a graph $\calG_\calC$ given by involutions that satisfy
 Conditions {\rm (i)} and {\rm (ii.b)} determine the descent sets
 of the vertices, up to a global application of the complementation map $\omega$.
\end{lemma}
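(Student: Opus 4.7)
The plan is to prove the lemma in two steps. First, I would show that the colored edges incident to any single vertex $c$ determine $\Des(c)$ up to the global complementation map $\omega$. Second, I would show that along any color-$i$ edge between $c$ and $c' = \varphi_i(c)$, the two vertices are forced to make the same choice from their respective pairs $\{\Des(\cdot), \omega(\Des(\cdot))\}$, so the $\omega$-ambiguity is constant on each connected component.

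For the first step, fix $c$. The graph records, for each $i \in \{2,\dotsc,n{-}1\}$, whether the color-$i$ edge at $c$ is present, that is, whether $c$ is fixed by $\varphi_i$. By Condition (i), $c$ is fixed by $\varphi_i$ precisely when positions $i{-}1$ and $i$ agree in $\Des(c)$ (both belong or both do not); and if $c$ is not fixed then by Condition (ii.b) together with (i), exactly one of $i{-}1, i$ lies in $\Des(c)$, so they disagree. Hence the presence or absence of each color-$i$ edge at $c$ records whether the adjacent pair $(i{-}1, i)$ agrees or disagrees in $\Des(c)$, and these data for $i = 2, \dotsc, n{-}1$ determine $\Des(c) \subset [n{-}1]$ completely once a choice at position $1$ is fixed. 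Flipping that choice produces $\omega(\Des(c))$.

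For the second step, suppose $\Des$ and $\Des'$ are two descent functions on $\calC$ that both satisfy Conditions (i) and (ii.b) with respect to the same involutions $\varphi_i$, and thus produce the same graph. The first step yields $\Des'(c) \in \{\Des(c), \omega(\Des(c))\}$ for every $c$; set $\epsilon_c = +1$ in the first case and $\epsilon_c = -1$ in the second. Consider a color-$i$ edge joining $c$ to $c'$. Since $c'$ is not fixed by $\varphi_i$, Condition (i) forces $\Des(c')$ and $\omega(\Des(c'))$ to have opposite restrictions to $\{i{-}1, i\}$, so $\Des'(c')$ is pinned down by its restriction there. Condition (ii.b) forces $\Des(c)$ and $\Des(c')$ to have opposite singleton restrictions to $\{i{-}1, i\}$, and the same for $\Des'$. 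A short case check then shows $\epsilon_{c'} = \epsilon_c$: if $\Des'(c)$ agrees with $\Des(c)$ on $\{i{-}1,i\}$ then by (ii.b) so does $\Des'(c')$ agree with $\Des(c')$ there, forcing equality by the distinguishing property just noted. Hence $\epsilon$ is constant on each connected component, which is the statement of the lemma. The main obstacle, which is modest, is the bookkeeping in this case check; no ingredient beyond Conditions (i) and (ii.b) is required.
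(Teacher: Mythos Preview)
Your proof is correct and follows essentially the same approach as the paper. Both arguments first observe that the pattern of incident edges at a single vertex determines its descent set up to $\omega$ (via Condition~(i)), and then propagate this choice across a color-$i$ edge using Condition~(ii.b); the paper phrases the propagation constructively (build $D(w)$ from $D(v)$), while you phrase it as uniqueness (the sign $\epsilon_c$ is constant along edges), but the content is the same.
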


\begin{proof}
 Let $v$ be a vertex in a connected component of $\calG_\calC$.
 Define a subset $D(v)\subset [n{-}1]$ as follows.
 First, let $1\in D(v)$.
 Then $2\in D(v)$ if and only if $v$ has no incident edge labeled 2.
 This ensures that Condition (i) of Definition~\ref{D:strong_dual_equivalence}
 holds. 
 For each of $i=3,\dotsc,n{-}1$ note that knowing if $i{-}1\in D(v)$ and whether or not
 $v$ has an incident edge with label $i$ determines whether or not $i\in D(v)$, by
 Condition (i).
 Then Conditions (i) and (ii.b) also determine $D(w)$ for any vertex $w$ adjacent to $v$.
 In this manner, we determine sets $D(w)$ for all vertices in the component of
 $\calG_\calC$ containing $v$.

 Had we assumed that $1\not\in D(v)$, we would have obtained the sets $\omega(D(w))$, the
 complement in $[n{-}1]$ of the sets $D(w)$.
\end{proof}

\begin{remark}
 Given a labeled graph $\calG$ on a set $(\calC,\Des)$, its reversal is the labeled graph
 $\rho\calG$ on the set $(\calC,\rho\circ\Des)$ where we replace an edge label of $i$ by $n-i$.
 Write $\omega\calG$ for the graph obtained from $\calG$ by replacing $\Des$ by $\omega\circ\Des$.

 We say that two labeled graphs $\calG$ and $\calG'$ are \demph{$\{\omega,\rho\}$-equivalent} if, as
 labeled graphs whose vertices are descent sets, we have
 \[
    \calG'\ \in\ \{ \calG\,,\ \omega\calG\,,\ \rho\calG\,,\ 
                   \rho\omega\calG=\omega\rho\calG\}\,.
 \]
 If $\calG,\calG'$ are equivalent graphs, then one is a dual equivalence graph if and only
 if the other one is.
\end{remark}

\subsection{Schubert vs.\ Schur constants}

Let $S_\infty$ be the set of permutations $w$ of $\{1,2,\dotsc\}$ with
$\{i\mid w(i)\neq i\}$ finite, which is the union of all finite symmetric groups.
The length $\ell(w)$ of a permutation $w\in S_\infty$ is its number of inversions, 
$\{i<j\mid w(i)>w(j)\}$.
We write \defcolor{$t_{ij}$} for the transposition interchanging the numbers $i<j$.

Lascoux and Sch\"utzenberger~\cite{LS82} defined the Schubert polynomials
$\frakS_w\in\Z[z_1,z_2,\dotsc]$, which are indexed by permutations $w\in S_\infty$.
The polynomial $\frakS_w$ is homogeneous of degree $\ell(w)$, and if
$w$ has no descents after position $n$, then
$\frakS_w\in\Z[z_1,\dotsc,z_n]$.
Schubert polynomials form a basis for the free $\Z$-module of all polynomials, which 
contains all Schur symmetric polynomials $s_\lambda(z_1,\dotsc,z_k)$ for all partitions
$\lambda$ with $\lambda_{k+1}=0$ and all $k$.
In particular, if \defcolor{$v(\lambda,k)$} is the permutation with a unique descent at
position $k$ and values $i+\lambda_{k+1-i}$ at $1\leq i\leq k$, then 
$\frakS_{v(\lambda,k)}=s_\lambda(z_1,\dotsc,z_k)$.
See~\cite{Mac91,Mac95} for more on Schubert and Schur polynomials.

Lascoux and Sch\"utzenberger showed that Schubert polynomials are polynomial
representatives of Schubert classes in the cohomology of the flag manifold.
Thus the integral structure constants
$c^w_{u,v}\in\Z$ defined by the identity
\[
   \frakS_u\cdot\frakS_v\ =\ \sum_w c^w_{u,v}\frakS_w
\]
are nonnegative.
We are concerned here with the \demph{Schubert vs.\ Schur constants $c^w_{u,v(\lambda,k)}$}
which are defined by the identity 
 \begin{equation}\label{Eq:SchubSchur}
   \frakS_u\cdot s_\lambda(z_1,\dotsc,z_k) =\ 
   \frakS_u\cdot \frakS_{v(\lambda,k)}\ =\ \sum_w c^w_{u,v(\lambda,k)}\frakS_w\,.
 \end{equation}
We use abstract dual equivalence to  give a combinatorial proof that these constants
$c^w_{u,v(\lambda,k)}$ are nonnegative.  

\begin{theorem}\label{Th:Main}
  For any permutations $u,w\in S_\infty$, positive integer $k$, and partition $\lambda$, we
  have $c^w_{u,v(\lambda,k)}\geq 0$.
\end{theorem}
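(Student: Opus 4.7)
The plan is to apply Theorem~\ref{Th:WDE_SP} to the quasisymmetric function of Bergeron and Sottile. By the results of Bergeron and Sottile recalled in the introduction, each Schubert vs.\ Schur coefficient $c^w_{u,v(\lambda,k)}$ appears as the coefficient of $s_\lambda$ in a quasisymmetric function $K_{(\calC,\Des)}$, where $\calC$ is the set of saturated chains of an interval in the Grassmannian-Bruhat order and $\Des$ is computed from its edge labels (as in Example~\ref{Ex:143256}). Those authors already showed this generating function is symmetric and that the Schubert vs.\ Schur coefficients are precisely its Schur coefficients, so the entire problem reduces to exhibiting that $K_{(\calC,\Des)}$ is Schur-positive. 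For this, I plan to produce a family of involutions $\{\varphi_i\}_{2 \leq i \leq n-1}$ on $\calC$ satisfying Definition~\ref{D:dual_equivalence} and then invoke Theorem~\ref{Th:WDE_SP}.

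The involutions themselves should come canonically from the combinatorics of \cite{Monoide}: on each length-three window of a chain one can perform a locally determined swap of the middle label (or, when forced, a longer local rearrangement) governed by the structure of the ambient Grassmannian-Bruhat interval, extended by the identity on all other positions. The promised Lemma~\ref{L:locality} is expected to show that such a family is uniquely determined by mild natural axioms; granted this, the structural conditions (i), (ii), and (iii) of Definition~\ref{D:strong_dual_equivalence} will follow directly from the construction and from properties of the Grassmannian-Bruhat order established in \cite{Monoide}.

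The remaining work is to verify conditions (iv.a), (iv.b), and (iv.c) of Definition~\ref{D:dual_equivalence}. By Remark~\ref{R:local} these are local, depending only on $\varphi_i$ and $\varphi_j$ with $|i-j| \leq 3$, so they need only be checked on subchains of length at most six. Since label-equivalence preserves both quasisymmetric functions and the entire dual equivalence structure, and since the Grassmannian-Bruhat order has finitely many label-equivalence classes of intervals of rank at most six, the verification reduces to a finite computer enumeration. This is where I expect the main obstacle to lie: condition (iv.c), with its flat $i$-chains and its reversal-symmetric counterpart, requires delicate case analysis, and producing a complete, computer-checkable catalogue of the relevant labeled intervals up to rank six is substantial combinatorial bookkeeping rather than a clean conceptual argument. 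Once that check succeeds, Theorem~\ref{Th:WDE_SP} yields the Schur-positivity of $K_{(\calC,\Des)}$, and hence $c^w_{u,v(\lambda,k)} \geq 0$.
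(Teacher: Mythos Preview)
Your proposal is correct and follows essentially the same approach as the paper: reduce to Schur-positivity of $K_{[u,w]_k}$ via~\eqref{Eq:Skew_Schubert}, build the local involutions $\varphi_i$ on chains using the substitution relations of Proposition~\ref{P:substitutions} (with Lemma~\ref{L:locality} pinning them down uniquely), verify Conditions (i)--(iii) directly from the construction, and then reduce Conditions (iv.a)--(iv.c) to a finite computer check on label-equivalence classes of intervals of rank at most six. The paper carries this out exactly as you outline, with the small refinement that (iv.a) and (iv.b) only require lengths up to five and that (ii.c)--(ii.d) are verified by a short direct argument rather than by machine.
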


This follows from Theorem~\ref{Th:DualEquivalance} in Section~\ref{S:two}.

Nonnegativity is known combinatorially in some cases.
When $u=v(\mu,k)$ is also Grassmannian with descent $k$, then 
$c^w_{v(\mu,k),v(\lambda,k)}=0$ unless $w=v(\nu,k)$, and in that case it equals the
Littlewood-Richardson coefficient $c^\nu_{\mu,\lambda}$.
Monk's formula (proven using geometry by Monk~\cite{Monk} and
combinatorially by Lascoux and Sch\"utzenberger~\cite{LS85}) is
 \begin{equation}\label{Eq:Monk}
   \frakS_u\cdot (z_1+\dotsb+z_k)\ =\ 
   \frakS_u\cdot \frakS_{t_{k\,k+1}}  \ =\   
   \sum \frakS_{u'}\,,
 \end{equation}
the sum over all $u'=ut_{ij}$ where $\ell(u')=\ell(u)+1$ and $i\leq k<j$.
The indices of summation in this formula define the cover relation in the 
\demph{$k$-Bruhat order, $\leq_k$}, that is,
\[
   u\ \lessdot_k\ u t_{ij}
   \qquad\mbox{if and only if }\ \ell(u t_{ij})=\ell(u)+1
   \quad\mbox{and}\quad i\leq k<j\,.
\]

Given a cover $u\lessdot_kw=ut_{ij}$ in the $k$-Bruhat order, let $t_{ab}$ with $a<b$ be 
the transposition such that $w=t_{ab}u$.
Label the cover $u\lessdot_k t_{ab}u$ with the integer $b$.
This gives the $k$-Bruhat order the structure of a labeled poset.
Write $u\xrightarrow{\,b\,}w$ to indicate that $u\lessdot_k w$ where $wu^{-1}$ is a
transposition $t_{ab}$ with $a<b$ so that the cover is labeled with $b$.
Bergeron and Sottile~\cite{BS_Hopf} considered the quasisymmetric generating function for
intervals in this labeled poset,
\[
   K_{[u,w]_k}\ :=\ 
    \sum_{\mbox{\scriptsize chains }c\mbox{\scriptsize\ in\ }[u,w]_k} Q_{\Des(c)}\,.
\]
Using the Pieri formula (see below) and the Jacobi-Trudy formula, they showed that
this is the symmetric generating function of the Schubert vs.\ Schur coefficients, 
 \begin{equation}\label{Eq:Skew_Schubert}
  K_{[u,w]_k}\ =\ 
   \sum_{|\lambda|=\ell(w)-\ell(u)}  c^w_{u,v(\lambda,k)} s_\lambda\,.
 \end{equation}
Our proof of Theorem~\ref{Th:Main} will involve putting the structure of a dual
equivalence on the set of labeled chains in an interval in the $k$-Bruhat order.
When $u,w$ are Grassmannian with descent $k$ and correspond to the partitions
$\mu,\lambda$, respectively, then the interval $[u,w]_k$ is isomorphic to the
interval $[\mu,\lambda]$ in Young's lattice, if we shift the labels by $-k$.

We explain the formula~\eqref{Eq:Skew_Schubert} and develop some combinatorics of the
$k$-Bruhat order.
Recall~\cite[I.5, Example 2]{Mac95} that we have 
\[
   (z_1+\dotsb+z_k)^m\ =\ \sum_{|\lambda|=m} f^\lambda s_\lambda(z_1,\dotsc,z_k)\,,
\]
the sum over all partitions $\lambda$ of $m$ where $f^\lambda$ is the number of standard
Young tableaux of shape $\lambda$.
Multiplying this by $\frakS_u$, expanding using~\eqref{Eq:SchubSchur} and~\eqref{Eq:Monk}, 
and then equating coefficients of $\frakS_w$ gives the following proposition.

\begin{proposition}[Prop.\ 1.1.1~\cite{BS_Duke}]
 The number of saturated chains in the $k$-Bruhat order from $u$ to $w$ is equal to
\[
   \sum_\lambda f^\lambda c^w_{u,v(\lambda,k)}\,.
\]
\end{proposition}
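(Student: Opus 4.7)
The plan is to compute the product $\frakS_u\cdot(z_1+\dotsb+z_k)^m$ in two different ways, where $m=\ell(w)-\ell(u)$, and then to equate the coefficients of $\frakS_w$ in the two resulting Schubert expansions.

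First, I would expand $(z_1+\dotsb+z_k)^m$ in the Schur basis using the cited identity
\[
  (z_1+\dotsb+z_k)^m\ =\ \sum_{|\lambda|=m} f^\lambda\,s_\lambda(z_1,\dotsc,z_k)\,,
\]
and then rewrite each $s_\lambda(z_1,\dotsc,z_k)$ as $\frakS_{v(\lambda,k)}$, using the stated identification of Schur polynomials with Grassmannian Schubert polynomials. Multiplying by $\frakS_u$ and applying the definition~\eqref{Eq:SchubSchur}, I obtain
\[
  \frakS_u\cdot (z_1+\dotsb+z_k)^m\ =\ \sum_{|\lambda|=m}\sum_{w'} f^\lambda\,c^{w'}_{u,v(\lambda,k)}\,\frakS_{w'}\,.
\]
The coefficient of $\frakS_w$ on the right is therefore $\sum_{|\lambda|=m} f^\lambda\,c^w_{u,v(\lambda,k)}$, which matches the right-hand side of the claimed formula (note that only $\lambda$ with $|\lambda|=m$ can contribute, since $\frakS_u\cdot\frakS_{v(\lambda,k)}$ is homogeneous of degree $\ell(u)+|\lambda|$ and $\frakS_w$ has degree $\ell(w)$).

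For the second expansion, I would apply Monk's formula~\eqref{Eq:Monk} iteratively $m$ times. Since $z_1+\dotsb+z_k=\frakS_{t_{k\,k+1}}$, a single multiplication sends $\frakS_u$ to $\sum_{u\lessdot_k u'}\frakS_{u'}$. An easy induction on $m$ then gives
\[
  \frakS_u\cdot(z_1+\dotsb+z_k)^m\ =\ \sum_{c}\frakS_{\ep(c)}\,,
\]
where the sum is over all saturated chains $c\colon u=u_0\lessdot_k u_1\lessdot_k\dotsb\lessdot_k u_m$ in the $k$-Bruhat order and $\ep(c)=u_m$ is the top of the chain. Consequently, the coefficient of $\frakS_w$ on this side is precisely the number of saturated chains from $u$ to $w$ in the $k$-Bruhat order (which is automatically zero unless $m=\ell(w)-\ell(u)$, since $k$-Bruhat covers raise length by one).

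Equating the two expressions for the coefficient of $\frakS_w$ yields the proposition. There is no real obstacle: the argument is a direct composition of Monk's formula, the expansion of a power of a complete symmetric polynomial in one variable into Schur polynomials, and the definition of the Schubert vs.\ Schur constants. The only mild subtlety is ensuring the iteration of Monk's formula terminates cleanly at length $m=\ell(w)-\ell(u)$; this is handled by the homogeneity remark above.
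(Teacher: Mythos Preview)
Your argument is correct and is exactly the approach the paper takes: multiply the identity $(z_1+\dotsb+z_k)^m=\sum_{|\lambda|=m}f^\lambda s_\lambda(z_1,\dotsc,z_k)$ by $\frakS_u$, expand one side via~\eqref{Eq:SchubSchur} and the other via iterated Monk~\eqref{Eq:Monk}, and compare coefficients of $\frakS_w$. The only quibble is terminological---$z_1+\dotsb+z_k$ is $h_1$, not a general complete symmetric polynomial---but this has no bearing on the proof.
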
   

In particular, $c^w_{u,v(\lambda,k)}=0$ unless $u\leq_k w$.
This suggests that a description of the constants in terms of chains in the $k$-Bruhat order
should exist.
Our proof of Theorem~\ref{Th:Main} using abstract dual equivalence provides such a
description. 

The complete homogeneous symmetric polynomial $h_m(z_1,\dotsc,z_k)$ is the sum of all
monomials of degree $m$ in $z_1,\dotsc,z_k$.
It is the Schur polynomial $s_{(m)}(z_1,\dotsc,z_k)$ where $(m)$ is the partition
with only one part and it has size $m$.
The Pieri formula gives the rule for multiplication of a Schubert polynomial by
$h_m(z_1,\dotsc,z_k)$.
It is
\[
   \frakS_u\cdot h_m(z_1,\dotsc,z_k)\ =\ \sum_\gamma \frakS_{\ep(\gamma)}\,,
\]
the sum over all saturated chains $\gamma$ in the $k$-Bruhat order of length $m$
starting at $u$, 
\[
   \gamma\ \colon\ 
    u  \ \xrightarrow{\,b_1\,}\ 
    w_1\ \xrightarrow{\,b_2\,}\ 
    \dotsb\ 
    \xrightarrow{\,b_m\,}\ w_m\ =:\ \ep(\gamma)
\]
with increasing labels, $b_1<b_2<\dotsb<b_m$.
Call this an \demph{increasing chain} of length $m$.

An equivalent formula was conjectured by Lascoux and Sch\"utzenberger in~\cite{LS82}, where
they suggested an inductive proof.
The above formulation of the Pieri formula was conjectured in~\cite{BeBi}, and proven using
geometry in~\cite{PieriSchubert}. 
Postnikov gave a combinatorial proof~\cite{Postnikov} and later another
combinatorial proof~\cite{K_Pieri} was given following the suggestion of Lascoux and
Sch\"utzenberger.

The Pieri formula implies a formula for multiplying a Schubert polynomial by a product of
complete homogeneous symmetric polynomials in terms of chains whose labels have descents
in a subset.
Through the Jacobi-Trudy formula, it implies a (non-positive) combinatorial formula for the 
constants $c^{w}_{u,v(\lambda,k)}$ in terms of chains in $[u,w]_k$~\cite{BS_Skew}.
There, the symmetric function on the right hand side of~\eqref{Eq:Skew_Schubert} was
defined, and in~\cite{BS_Hopf} the quasisymmetric function $K_{[u,w]_k}$ was defined and
the equality~\eqref{Eq:Skew_Schubert} was proven.

\subsection{Grassmannian-Bruhat order on $S_\infty$}

The Grassmannian-Bruhat order $\preceq$ was introduced and studied in~\cite[\S 3]{BS_Duke}.
It is a ranked labeled poset on the infinite symmetric group $S_\infty$ which has the following
defining property:
If $u\leq_k w$ and we set $\zeta:=wu^{-1}$, then the map $\eta\mapsto \eta u$ is an isomorphism
of labeled posets,
\[
   [u,w]_k\ \xrightarrow{\ \sim\ }\ [e,\zeta]_{\preceq}\,.
\]
Consequently, the quasisymmetric function $K_{[u,w]_k}$ and therefore the constants 
$c^w_{u,v(\lambda,k)}$ depend only upon $\zeta$ and $\lambda$.
We say that $\zeta$ has \demph{rank $n$} if the poset $[e,\zeta]_\preceq$ has rank $n$.

For example, the  labeled poset of  Example~\ref{Ex:143256} is isomorphic
to $[142635,456123]_3$.
The computation in that example shows that $K_{(1,4,5,3,2,6)}=s_{32}+s_{311}$ and therefore
\[
   c^{456123}_{142635,v((3,2),3)}\ =\ 
   c^{456123}_{142635,v((3,1,1),3)}\ =\ 1
   \quad\mbox{and}\quad
   c^{456123}_{142635,v((2,2,1),3)}\ =\ 0\,.
\]

The Grassmannian-Bruhat order has a more direct definition, given in Theorem 3.1.5({\it ii})
of~\cite{BS_Duke}.
For $\zeta\in S_\infty$, define 
$\defcolor{\up(\zeta)}:=\{a\mid a<\zeta(a)\}$ and 
$\defcolor{\dw(\zeta)}:=\{b\mid b>\zeta(b)\}$.
Let $\eta,\zeta\in S_\infty$.
Then $\eta\preceq\zeta$ if and only if the following hold.
\begin{enumerate}
 \item For all $a\in\up(\zeta)$, we have $a\leq\eta(a)\leq\zeta(a)$,
 \item for all $b\in\dw(\zeta)$, we have $b\geq\eta(b)\geq\zeta(b)$, and 
 \item For all $a<b$ with either $a,b\in\up(\zeta)$ or  $a,b\in\dw(\zeta)$,
       if $\zeta(a)<\zeta(b)$, then $\eta(a)<\eta(b)$.
\end{enumerate}
The labeled structure of $\preceq$ is inherited from the $k$-Bruhat order as follows.
When $\eta\precdot\zeta$ is a cover, we have a transposition $t_{ab}=\zeta\eta^{-1}$ with
$a<b$, and the cover is labeled with $\max\{a,b\}$.
Saturated chains in an interval $[e,\zeta]_\preceq$ in the Grassmannian-Bruhat order are
therefore represented by sequences $(t_{a_1 b_1},\dotsc,t_{a_n b_n})$ of transpositions, where,
if we set $\eta_i := t_{a_i b_i}\dotsb t_{a_1 b_1}$ for each $i=1,\dotsc,n$ with $e=\eta_0$, 
then we have $\eta_{i-1}\precdot \eta_i$ for each $i=1,\dotsc,n$ and $\eta_n=\eta$.

There are several important properties that can be deduced
from this definition.
One is \demph{invariance under relabeling}.
Suppose that $I=\{i_1<i_2<\dotsb\}\subset\N$ is a set of integers.
This induces an inclusion of $S_{|I|}$ into $S_\infty$,
\[
   \iota_I(\zeta) (j)\ =\ \left\{
    \begin{array}{ccl} j&\mbox{ if }&j\not\in I\\
                       i_{\zeta(k)}&\mbox{ if }&j=i_k\in I
    \end{array}\right.\ .
\]
For example, if $\zeta=(1,4,2)(3,5)$ and $I=\{1,3,5,7,9,\dotsc\}$, then 
$\iota_I(\zeta)=(1,7,3)(5,9)$.
Then $\eta\preceq\zeta$ if and only if $\iota_I(\eta)\preceq\iota_I(\zeta)$.
The following properties of the Grassmannian-Bruhat order
from Theorem 3.2.3 of~\cite{BS_Duke} follow directly from its definition.

\begin{proposition}\label{P:properties}
 The Grassmannian-Bruhat order $\preceq$ on $S_\infty$ has the following properties.
 \begin{enumerate}
  \item If $\eta\preceq\zeta$, then $\xi\mapsto \xi\eta^{-1}$ is an isomorphism of labeled
    posets $[\eta,\zeta]_\preceq\xrightarrow{\;\sim\;}[e,\zeta\eta^{-1}]_{\preceq}$.

  \item For every $I\subset \N$ the map $\iota_I\colon S_{|I|}\to S_\infty$ is an injection
    of labeled posets such that if $\eta\in S_{|I|}$ then 
    $[e,\eta]_\preceq$ is label-equivalent to $[e,\iota(\eta)]_{\preceq}$.

  \item The map $\eta\mapsto \eta\zeta^{-1}$ induces an order-reversing but label-preserving
    isomorphism between $[e,\eta]_\preceq$ and $[e,\eta^{-1}]_\preceq$.
 \end{enumerate}  
\end{proposition}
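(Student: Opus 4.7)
The plan is to derive all three statements by combining two inputs: the \emph{defining property} of $\preceq$---that for $u\leq_k w$, right multiplication by $u^{-1}$ gives a labeled-poset isomorphism $[u,w]_k \xrightarrow{\sim} [e, wu^{-1}]_\preceq$---together with the \emph{direct characterization} via conditions (1)--(3) on $\up(\zeta)$ and $\dw(\zeta)$ stated just before the proposition. No deep new ingredient is needed beyond unwinding these descriptions; each part reduces to a routine verification.

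For part (1), I would apply the defining property twice. Given $\eta \preceq \zeta$, first realize $[e,\zeta]_\preceq$ as a $k$-Bruhat interval $[u,w]_k$ with $wu^{-1} = \zeta$; under the defining isomorphism the element $\eta$ corresponds to $\eta u \in [u,w]_k$, and the subinterval $[\eta,\zeta]_\preceq$ corresponds to the $k$-Bruhat subinterval $[\eta u, w]_k$. Applying the defining property again to $[\eta u, w]_k$ yields a labeled-poset isomorphism to $[e, w(\eta u)^{-1}]_\preceq = [e, \zeta\eta^{-1}]_\preceq$ via right multiplication by $(\eta u)^{-1}$. Composing the two right multiplications sends $\xi \in [\eta,\zeta]_\preceq$ to $\xi\eta^{-1}$. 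Labels are preserved throughout because they depend only on the transposition $t_{ab}$ with $\xi' = t_{ab}\xi$, which is unchanged by right multiplication.

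For part (2), I would argue directly from the characterization. The map $\iota_I$ amounts to relabeling indices by the order-preserving bijection $j \leftrightarrow i_j$, which preserves every inequality appearing in conditions (1)--(3); hence $\eta \preceq \zeta$ in $S_{|I|}$ if and only if $\iota_I(\eta) \preceq \iota_I(\zeta)$ in $S_\infty$. A cover $\eta\precdot\zeta$ with $\zeta\eta^{-1} = t_{ab}$ translates to a cover $\iota_I(\eta)\precdot\iota_I(\zeta)$ given by $t_{i_a\,i_b}$, hence labeled $i_b$; since $b \leq b'$ iff $i_b \leq i_{b'}$, the cover labels occur in the same relative order, giving the required label-equivalence.

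For part (3), I would exploit that the defining characterization is symmetric under the involution $\zeta \leftrightarrow \zeta^{-1}$: a direct check gives $\up(\zeta^{-1}) = \zeta(\dw(\zeta))$ and $\dw(\zeta^{-1}) = \zeta(\up(\zeta))$, and conditions (1)--(3) are invariant (up to relabeling by $\zeta$) when these two roles are swapped. This symmetry produces the claimed order-reversing bijection of intervals. The main obstacle is verifying label-preservation: inverting a left multiplication by $t_{ab}$ produces a right multiplication, which becomes left multiplication by a conjugated transposition when expressed in the inverted element, and one must confirm that the quantity $\max\{a,b\}$ is nevertheless preserved on each cover of $[e,\zeta]_\preceq$. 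I would carry out this check case-by-case, using the explicit form of covers in the interval together with the conditions defining $\preceq$ to exclude the configurations in which the maximum would shift.
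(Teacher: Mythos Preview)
Your plan is sound, and in fact the paper does not supply its own proof of this proposition: the sentence immediately preceding it says only that these properties ``follow directly from its definition'' and cites Theorem~3.2.3 of~\cite{BS_Duke}. So there is no argument in the paper to compare against, and your outline is a reasonable way to fill in the details.

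Two small remarks. In part~(1), your route through the $k$-Bruhat order tacitly uses that every interval $[e,\zeta]_\preceq$ arises as some $[u,w]_k$. This is true, and is implicit in calling the translation property the ``defining property'' of $\preceq$, but it does require a one-line justification from the direct characterization (e.g.\ take $k=|\up(\zeta)|$ and let $u$ carry $\{1,\dotsc,k\}$ order-isomorphically onto $\up(\zeta)$).

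In part~(3), the label-preservation you flag as the ``main obstacle'' is actually immediate and requires no case analysis. A cover $\mu\precdot\nu$ has label $b$ where $\nu\mu^{-1}=t_{ab}$ with $a<b$. Under $\xi\mapsto\xi\zeta^{-1}$ this becomes the (reversed) cover $\nu\zeta^{-1}\precdot\mu\zeta^{-1}$ in $[e,\zeta^{-1}]_\preceq$, and
\[
   (\mu\zeta^{-1})(\nu\zeta^{-1})^{-1}\ =\ \mu\nu^{-1}\ =\ t_{ab}\,,
\]
so the label is still $b$. Because covers are defined by \emph{left} multiplication by a transposition while the map is \emph{right} multiplication by $\zeta^{-1}$, no conjugation occurs; your anticipated case-by-case check is unnecessary. (The statement of the proposition has a minor notational slip---the intervals should read $[e,\zeta]_\preceq$ and $[e,\zeta^{-1}]_\preceq$---which may have caused you to think of the map as inversion $\xi\mapsto\xi^{-1}$ rather than right translation.)
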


\begin{remark}\label{R:GBO}
 This result has consequences for the study of chains in the Grassmannian-Bruhat order, and
 ultimately the quasisymmetric function $K_{[u,w]_k}$.
 By (1), if
\[
   \eta\ \precdot\ t_{a_1 b_1}\eta\ \precdot\ t_{a_2 b_2} t_{a_1 b_1}\eta\ 
   \precdot\ \dotsb\ \precdot\ 
   t_{a_n b_n}\dotsb t_{a_1 b_1}\eta\ =\ \zeta
\]
 is a (saturated) chain in the interval $[\eta,\zeta]_\preceq$, then
 \begin{equation}\label{Eq:rooted_chain}
   e\ \precdot\ t_{a_1 b_1}\ \precdot\ 
     t_{a_2 b_2} t_{a_1 b_1}\precdot\ \dotsb\ \precdot\ 
   t_{a_n b_n}\dotsb t_{a_2 b_2} t_{a_1 b_1}\ =\ \zeta\eta^{-1}\ =:\ \xi
 \end{equation}
 is a chain in the interval $[e,\xi]_\preceq$.

 Statement (3) implies that~\eqref{Eq:rooted_chain} is a chain in $[e,\xi]_\preceq$ if and 
 only if
\[
   e\ \precdot\ t_{a_n b_n}\ \precdot\ \dotsb\  \precdot\ 
   t_{a_2 b_2}\dotsb t_{a_n b_n}\ \precdot\ 
   t_{a_1 b_1} t_{a_2 b_2} \dotsb t_{a_n b_n}\ =\ \xi^{-1}
\]
 is a chain in $[e,\xi^{-1}]_\preceq$.
 Let $c$ be the chain~\eqref{Eq:rooted_chain} in $[e,\xi]_\preceq$ and $c^\vee$ the
 corresponding reversed chain in $[e,\xi^{-1}]_\preceq$.
 Observe that the sequence of labels in reversed in passing from $c$ to $c^\vee$, and therefore
 $\Des(c^\vee)=\rho\omega\Des(c)$.
 That is, $i\in\Des(c)\Leftrightarrow n{-}i\not\in\Des(c^\vee)$.
\end{remark}

Chains in the Grassmannian-Bruhat order were studied in~\cite{Monoide}.
It was written in terms of a monoid for that order analogous to the nil-Coxeter monoid for the
weak order on the symmetric group.
We summarize its results for chains in the Grassmannian-Bruhat order.

\begin{proposition}\label{P:substitutions}
 Let $c:=(t_{a_1 b_1},\dotsc,t_{a_n b_n})$ be the sequence of transpositions in a chain in
 the interval $[e,\zeta]_\preceq$ of the Grassmannian-Bruhat order of rank $n$.
 Any other chain in this interval is obtained from $c$ by a sequence of
 substitutions that replace a subchain of length two or three by an equivalent subchain 
 according to one of the following rules.
 \begin{equation} \label{Eq:ChainRelations}
   \begin{array}{lrcll}
     (i)\quad& (t_{\beta\gamma},t_{\gamma\delta},t_{\alpha\gamma})&\leftrightarrow&
               (t_{\beta\delta},t_{\alpha\beta},t_{\beta\gamma})&
               \quad\mbox{if }\alpha<\beta<\gamma<\delta\\
     (ii)\quad& (t_{\alpha\gamma},t_{\gamma\delta},t_{\beta\gamma})&\leftrightarrow&
                (t_{\beta\gamma},t_{\alpha\beta},t_{\beta\delta})&
                \quad\mbox{if }\alpha<\beta<\gamma<\delta\\
     (iii)\quad& (t_{\alpha\beta},t_{\gamma\delta})&\leftrightarrow&
                 (t_{\gamma\delta},t_{\alpha\beta})&
          \quad\mbox{if }\beta<\gamma\mbox{ or }\alpha<\gamma<\delta<\beta
   \end{array}
 \end{equation}
 Moreover, any sequence of such replacements results in a chain in the interval
 $[e,\zeta]_\preceq$. 

 No chain in $[e,\zeta]_\preceq$ contains a subsequence having
 the following forms, 
 \begin{equation} \label{Eq:ZeroRelations}
   \begin{array}{lrcll}
     (iv)\quad& t_{\alpha\gamma},t_{\beta\delta}&\mbox{or}&
               t_{\beta\delta},t_{\alpha\gamma}&
          \quad\mbox{for }\alpha\leq\beta<\gamma\leq\delta\\
     (v)\quad& t_{\beta\gamma},t_{\alpha\beta},t_{\beta\gamma}&\mbox{or}&
               t_{\alpha\beta},t_{\beta\gamma},t_{\alpha\beta}&
               \quad\mbox{for }\alpha<\beta<\gamma\,.
   \end{array}
 \end{equation}
 Finally, a sequence $(t_{a_1 b_1},\dotsc,t_{a_n b_n})$ of transpositions is a chain in the
 Grassmannian-Bruhat order if and only if for any sequence of substitutions using $(i)$,
 $(ii)$, or $(iii)$, no subsequence of form $(iv)$ or $(v)$ is ever encountered.
\end{proposition}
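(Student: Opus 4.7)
The plan is to prove all four assertions in tandem by treating the relations $(i)$--$(iii)$ and the forbidden patterns $(iv)$--$(v)$ as defining a rewriting system on sequences of transpositions, and then establishing a confluence/diamond property for this system. I would organize the argument in three main steps.

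First, I would verify locally that each relation $(i)$--$(iii)$ preserves the property of being a saturated chain in $[e,\zeta]_\preceq$. For each, this requires two checks: the product of the transpositions (read right-to-left) is the same on both sides, which is a routine computation on four-letter supports; and if the left-hand side forms a valid sub-chain between two elements $\eta\precdot\eta'\precdot\eta''\precdot\eta'''$, then so does the right-hand side. The second check proceeds by direct testing of conditions (1)--(3) in the definition of $\preceq$ on the intermediate elements produced. This also handles the ``moreover'' assertion. Similarly, to show the forbidden patterns $(iv)$, $(v)$ cannot occur in any chain, I verify directly from the covering definition that each such pattern would force either a product whose length is too small (violating saturation) or an inversion forbidden by condition (3) on pairs of indices lying in the same side of $\up(\zeta)/\dw(\zeta)$. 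For instance, in $(iv)$ with $\alpha\leq\beta<\gamma\leq\delta$, the product $t_{\beta\delta}t_{\alpha\gamma}$ is a $4$-cycle but its rank in $\preceq$ is at most $2$, forcing a non-cover somewhere.

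Second, I would establish the connectivity assertion by induction on the rank $n$. Given two chains $c$ and $c'$ in $[e,\zeta]_\preceq$ with initial transpositions $t_1$ and $t_1'$, if $t_1=t_1'$ then by Proposition~\ref{P:properties}(1) the tails lie in $[e,\zeta t_1^{-1}]_\preceq$ and the inductive hypothesis applies. Otherwise, one must exhibit a sequence of moves $(i)$--$(iii)$ that brings a common transposition to the front of both chains. The main technical obstacle is this step: it requires a case analysis on the relative positions of the supports $\{a_1,b_1\}$ and $\{a_1',b_1'\}$ and the next few transpositions, using the cover conditions to show that one of $(i)$, $(ii)$, or $(iii)$ is always applicable in a way that makes progress toward a common prefix. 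This is essentially a local diamond lemma for the monoid.

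Third, the ``if and only if'' characterization follows by combining the above. If a sequence is a chain, it avoids $(iv)$ and $(v)$, and no sequence of moves introduces them (since moves preserve the chain property). Conversely, if $(t_{a_1b_1},\dotsc,t_{a_nb_n})$ is not a chain, then some prefix fails a covering condition; I would trace this failure through the conditions (1)--(3) of $\preceq$ and show that by applying moves $(i)$--$(iii)$ to rearrange the prefix, one eventually exposes an explicit subsequence of form $(iv)$ or $(v)$. The hardest part of the entire program is the diamond step in paragraph two; the forbidden-pattern characterization and the local verifications are then bookkeeping, but the diamond property is what guarantees that the monoid generated by these relations faithfully encodes the interval $[e,\zeta]_\preceq$.
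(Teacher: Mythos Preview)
The paper does not actually prove this proposition; it is stated as a summary of results from \cite{Monoide}, where the relations $(i)$--$(v)$ are the defining relations of a monoid shown there to encode the Grassmannian--Bruhat order. So there is no in-paper proof to compare against; the relevant comparison is with the argument in \cite{Monoide}.

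Your outline is essentially the strategy of \cite{Monoide}, recast in the language of chains rather than monoid words: local checks that the relations preserve validity, a confluence/diamond argument to get connectivity, and then the forbidden-pattern characterization as a corollary. That is the right architecture, and you correctly single out the diamond step as the crux. Two cautions. First, your illustrative computation for $(iv)$ is not quite right: when $\alpha<\beta<\gamma<\delta$ are all distinct, $t_{\beta\delta}t_{\alpha\gamma}=(\alpha\,\gamma)(\beta\,\delta)$ is a product of two disjoint transpositions, not a $4$-cycle; the obstruction is not a length drop of the product but the failure of the second step to be a cover in $\preceq$ (the ``crossing'' configuration violates condition~(3) on relative order within $\up/\dw$). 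Second, the diamond step is more delicate than your sketch suggests: bringing a common first transposition to the front is not always achievable by a single move, and the case analysis in \cite{Monoide} goes through a normal-form argument for monoid words rather than a naive prefix-matching induction. If you intend to carry this out independently, you will need a well-founded measure (e.g.\ a suitable lexicographic statistic on the chain) that each rewriting step decreases, together with local confluence at each overlap of relations; Newman's lemma then gives global confluence and hence connectivity.
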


Observe that the reversal of chains (sending a chain in $[e,\zeta]_\preceq$ to the
corresponding chain in $[e,\zeta^{-1}]_\preceq$) fixes the substitution $(iii)$ and
interchanges $(i)$ with $(ii)$.

\begin{remark}\label{disjoint}
 We express the substitution $(iii)$ in another form.
 Call transpositions satisfying the conditions for $(iii)$ \demph{disjoint}.
 There are exactly two pairs of disjoint transpositions on a four element set 
 $\{\alpha<\beta<\gamma<\delta\}$, namely
 \[
   (t_{\alpha\beta},t_{\gamma\delta})
   \quad\mbox{and}\qquad
   (t_{\alpha\delta},t_{\beta\gamma})\,.
 \]
 These correspond to the two noncrossing matchings on this set.
 The one crossing matching gives rise to the excluded
 subsequence~\eqref{Eq:ZeroRelations}$(iv)$. 
 We display these three below.
 \[
  \begin{picture}(60,27)(0,-10)
   \put(0,0){\includegraphics{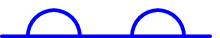}}
   \put( 4,-8){$\alpha$}  \put(19,-10){$\beta$}
   \put(34,-8){$\gamma$}  \put(49,-10){$\delta$}
  \end{picture}
   \qquad
  \begin{picture}(60,27)(0,-10)
   \put(0,0){\includegraphics{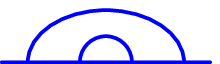}}
   \put( 4,-8){$\alpha$}  \put(19,-10){$\beta$}
   \put(34,-8){$\gamma$}  \put(49,-10){$\delta$}
  \end{picture}
   \qquad
  \begin{picture}(60,27)(0,-10)
   \put(0,0){\includegraphics{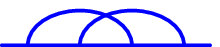}}
   \put( 4,-8){$\alpha$}  \put(19,-10){$\beta$}
   \put(34,-8){$\gamma$}  \put(49,-10){$\delta$}
  \end{picture}
 \]
 The pairs of transpositions $(t_{\alpha\beta},t_{\beta\gamma})$ and 
 $(t_{\beta\gamma},t_{\alpha\beta})$ with $\alpha<\beta<\gamma$ are \demph{connected} and
 they each give a valid chain of length 2.
 By~\eqref{Eq:ZeroRelations}$(iv)$, only connected or disjoint pairs of
 transpositions form valid chains of length 2.

 A set of $n$ pairwise disjoint transpositions with indices drawn 
 from a set of size $2n$ corresponds to a noncrossing complete matching on 
 this set, and there are therefore $C_n$ of these, where $C_n$ is the $n$th Catalan number. 
 Given such a matching, all $n!$ orderings of its $n$ transpositions obtained using
 substitution~\eqref{Eq:ChainRelations}$(iii)$ give valid chains.
\end{remark}

\begin{example}\label{Ex:124536}
The interval $[e,(124536)]_\preceq$ is displayed in Figure~\ref{F:124536} with its
nine chains.
\begin{figure}[htb]
 \begin{minipage}[b]{155pt}
   \begin{enumerate}
    \setcounter{enumiii}{2}
    \item $(t_{35},t_{56},t_{45},t_{24},t_{12})$\rule{0pt}{12pt}\vspace{9.1pt}
    \item $(t_{45},t_{34},t_{46},t_{24},t_{12})$\vspace{9.1pt}
    \item $(t_{45},t_{36},t_{23},t_{34},t_{12})$\vspace{9.1pt}
    \item $(t_{36},t_{45},t_{23},t_{34},t_{12})$\vspace{9.1pt}
    \item $(t_{45},t_{36},t_{23},t_{12},t_{34})$\vspace{9.1pt}
    \item $(t_{36},t_{45},t_{23},t_{12},t_{34})$\vspace{9.1pt}
    \item $(t_{36},t_{23},t_{45},t_{34},t_{12})$\vspace{9.1pt}
    \item $(t_{36},t_{23},t_{45},t_{12},t_{34})$\vspace{9.1pt}
    \item $(t_{36},t_{23},t_{12},t_{45},t_{34})$\vspace{5.1pt}
   \end{enumerate}
  \end{minipage}
  \begin{picture}(195,205)(-88,0)
   \put(-100,0){\includegraphics{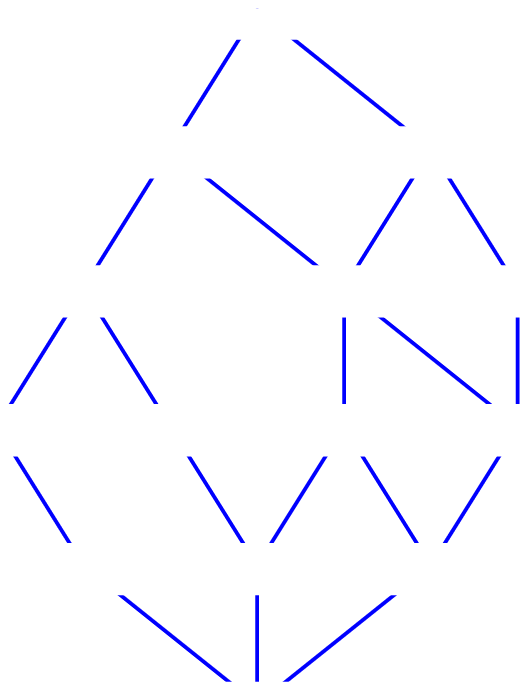}}

   \put(-22,197){$(124536)$}

   \put(-27,182){\smaller$t_{12}$}\put(27,182){\smaller$t_{34}$}

   \put(-44,157){$(24536)$}   \put(26,157){$(1236)(45)$}

   \put(-52,142){\smaller$t_{24}$}\put(1,142){\smaller$t_{34}$}
        \put(23,142){\smaller$t_{12}$}\put(64,142){\smaller$t_{45}$}

   \put(-66,117){$(3645)$} \put(1,117){$(326)(45)$}
         \put(59,117){$(1236)$}

   \put(-77,102){\smaller$t_{45}$}\put(-36,102){\smaller$t_{46}$}
    \put(10,102){\smaller$t_{23}$}\put(51,102){\smaller$t_{45}$}
                   \put(78,102){\smaller$t_{12}$}

   \put(-88,77){$(365)$} \put(-38,77){$(345)$} 
         \put(5,77){$(36)(45)$}   \put(63,77){$(236)$}

   \put(-76,57){\smaller$t_{56}$}\put(-25,57){\smaller$t_{34}$}
       \put(-2,62){\smaller$t_{36}$}\put(39,62){\smaller$t_{45}$}
                   \put(65,57){\smaller$t_{23}$}

   \put(-60,37){$(35)$}\put(-10,37){$(45)$}\put(40,37){$(36)$}

   \put(-38,17){\smaller$t_{35}$}\put(3,23){\smaller$t_{45}$}\put(30,17){\smaller$t_{36}$}

   \put(-3,-1){$e$}

  \end{picture}
 \caption{The interval $[e,(124536)]_\preceq$ and its nine chains.}\label{F:124536}
\end{figure}
By Proposition~\ref{P:substitutions}, these chains are connected by
substitutions of type $(i)$, $(ii)$, and $(iii)$,
We display the graph of these substitutions, with the numbering from
Figure~\ref{F:124536}. 
\[
  \begin{picture}(213,56)(0,-3)
  \put(6,2){\includegraphics{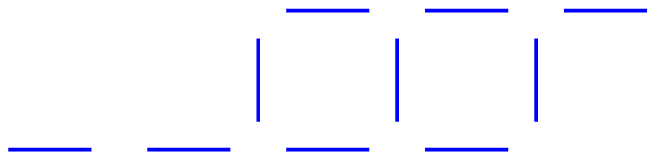}}
  \put(  0,0){$(1)$}
  \put( 40,0){$(2)$}
  \put( 80,0){$(3)$} \put( 80,40){$(5)$}
  \put(120,0){$(4)$} \put(120,40){$(6)$}
  \put(160,0){$(7)$} \put(160,40){$(8)$}
                     \put(200,40){$(9)$}

  \put( 21,7){\scriptsize$(ii)$}
  \put( 63,7){\scriptsize$(i)$}
   \put( 69,22){\scriptsize$(iii)$}
  \put(100,7){\scriptsize$(iii)$}  \put(100,47){\scriptsize$(iii)$}
   \put(109,22){\scriptsize$(iii)$}
  \put(140,7){\scriptsize$(iii)$}  \put(140,47){\scriptsize$(iii)$}
  \put(169.5,22){\scriptsize$(iii)$}
                                   \put(180,47){\scriptsize$(iii)$}
  \end{picture}
\]

The interval $[e,(145326)]_\preceq$ of Example~\ref{Ex:143256} has eleven chains.
If we number them 1---11 according to the order in which their labels appear
in~\eqref{Eq:ElevenChainLabels}, then we have the following graph of substitutions, which
may be read from the poset of Figure~\ref{F:145326}.
 \begin{equation}\label{Eq:SubstitutionGraph}
  \raisebox{-23pt}{\begin{picture}(233,56)(0,-3)
   \put(6,2){\includegraphics{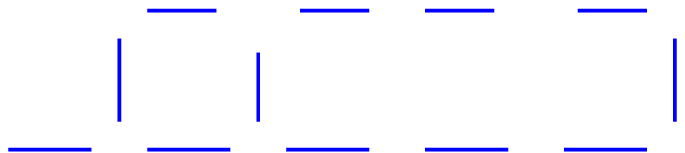}}
   \put(  0,0){$(6)$}
   \put( 40,0){$(7)$} \put( 40,40){$(9)$}
   \put( 80,0){$(1)$} \put( 78,40){$(10)$}
   \put(120,0){$(8)$} \put(120,40){$(5)$}
   \put(160,0){$(2)$} \put(158,40){$(11)$}
   \put(200,0){$(3)$} \put(200,40){$(4)$}

   \put( 23,7){\scriptsize$(i)$}
   \put( 61,7){\scriptsize$(ii)$}  \put( 61,47){\scriptsize$(ii)$}
   \put(100,7){\scriptsize$(iii)$} \put(101,47){\scriptsize$(ii)$}
   \put(140,7){\scriptsize$(iii)$} \put(140,47){\scriptsize$(iii)$}
   \put(182,7){\scriptsize$(ii)$}  \put(181,47){\scriptsize$(iii)$}
 
   \put( 29,22){\scriptsize$(iii)$}
   \put( 69,22){\scriptsize$(iii)$}
   \put(210,22){\scriptsize$(iii)$}
  \end{picture}}
 \end{equation}

\end{example}

\section{A dual equivalence for $K_{[e,\zeta]_\preceq}$}\label{S:two}

We prove Theorem~\ref{Th:Main} by putting the structure of a dual equivalence on the set
of labeled chains in an interval of the Grassmannian-Bruhat order. 
For this, we will use the substitutions of
Proposition~\ref{P:substitutions} to define involutions $\varphi_i$ 
on chains in the Grassmannian-Bruhat order, which we will show satisfy 
the conditions for a dual equivalence.
For the last three conditions, (iv.a), (iv.b), and (iv.c), we use
Proposition~\ref{P:properties}~(2) to reduce this to a finite verification that we
complete with the help of a computer.

Let $c$ be a chain in the Grassmannian-Bruhat order of length $n$.
Using the  substitutions $(i)$, $(ii)$, and $(iii)$ of
Proposition~\ref{P:substitutions} to define $\varphi_i(c)$ ensures that
it is a chain in the same interval as $c$.
To ensure that the involutions  $\varphi_i$ satisfy Condition
(iii) of a dual equivalence, we require that  $\varphi_i$ acts \demph{locally}
on $c$ in that the chains $c$ and $\varphi_i(c)$ agree, except possibly at positions
$i{-}1,i,i{+}1$. 
We ask for a \demph{uniform} definition of these involutions in that 
$\varphi_i$  and  $\varphi_j$ have the same definition, once we translate indices by
$j{-}i$, and that this definition is invariant under relabeling.
We explain this.
If $I\subset\N$ is a subset and $c$ a chain in $[e,\zeta]_\preceq$, write $\iota_I(c)$ for
the corresponding chain in $[e,\iota_I(\zeta)]_\preceq$.
If for every $I$, we have,
\[
   \iota_I(\varphi_i(c))\ =\ \varphi_i(\iota_I(c))\,,
\]
then $\varphi_i$ is \defcolor{invariant under relabeling}.
It follows that local and uniform involutions are determined by their action on chains of
length three.
An involution $\varphi_i(c)$ is \demph{reversible} if
$\varphi_i(c^\vee)=\varphi_i(c)^\vee$, where $c^\vee$ is the reversal of the chain $c$.

\begin{lemma}\label{L:locality}
 There is a unique dual equivalence on chains in the Grassmannian-Bruhat order 
 of length three for which $\varphi_i(c)$ is obtained from $c$ by a minimal number
 of substitutions $(i)$, $(ii)$, and $(iii)$ of Proposition~$\ref{P:substitutions}$.
 It is given by the following three rules.
\[
  \begin{array}{rclcl}
   \mbox{\bf (A)}&&
    \begin{array}{c}
      \varphi_2\colon
      (t_{\beta\gamma},t_{\alpha\beta},t_{\beta\delta})\ 
      \longleftrightarrow\  (t_{\alpha\gamma},t_{\gamma\delta},t_{\beta\gamma})\\
      \varphi_2\colon
      (t_{\beta\delta},t_{\alpha\beta},t_{\beta\gamma})\ 
      \longleftrightarrow\  (t_{\beta\gamma},t_{\gamma\delta},t_{\alpha\gamma})
     \end{array}
     &&\mbox{if\ }\alpha<\beta<\gamma<\delta\vspace{4pt}\\
   \mbox{\bf (B)}&&  
    \begin{array}{c}
      \varphi_2\colon
      (t_{b\beta},t_{a\alpha},t_{c\gamma})\ 
      \longleftrightarrow\  (t_{b\beta},t_{c\gamma},t_{a\alpha})\\
      \varphi_2\colon  (t_{c\gamma},t_{a\alpha},t_{b\beta})\ 
      \longleftrightarrow\  (t_{a\alpha},t_{c\gamma},t_{b\beta})
     \end{array}
     &&\hspace{-5pt}\begin{array}{l}\mbox{if\ }\alpha<\beta<\gamma\\
      \mbox{\ and\ }\{a,\alpha\},\ \{c,\gamma\}\mbox{ are disjoint}
     \end{array}\vspace{4pt}
  \\
   \mbox{\bf (C)}&&
    \begin{array}{c}
      \varphi_2\colon  (t_{pq},t_{\alpha\beta},t_{\beta\gamma})\ 
      \longleftrightarrow\  (t_{\alpha\beta},t_{\beta\gamma},t_{pq})
      \\
      \varphi_2\colon    (t_{\beta\gamma},t_{\alpha\beta},t_{pq})\ 
      \longleftrightarrow\  (t_{pq},t_{\beta\gamma},t_{\alpha\beta})
    \end{array}
     &&\mbox{if\ }\alpha<\beta<p<q<\gamma\\
   \end{array}
\]
%
%
%
 This is uniform and reversible.
\end{lemma}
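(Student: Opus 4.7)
The plan is to proceed by enumeration, using the constraints imposed by conditions~(i) and~(ii.b) of Definition~\ref{D:strong_dual_equivalence} together with the hypothesis that $\varphi_2(c)$ is reached by a minimal number of substitutions from Proposition~\ref{P:substitutions}. First, condition~(i) forces $\varphi_2(c)=c$ on any chain $c=(t_1,t_2,t_3)$ whose label sequence $(b_1,b_2,b_3)$ is monotone, i.e.\ whose restricted descent set is $\{1,2\}$ or $\emptyset$. On a chain with exactly one descent in $\{1,2\}$, condition~(ii.b) requires $\varphi_2(c)$ to be a chain in the same interval $[e,\zeta]_\preceq$ whose restricted descent set is the other singleton. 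By Proposition~\ref{P:substitutions}, any such chain is reachable by a sequence of substitutions $(i)$, $(ii)$, $(iii)$, and minimality selects one canonically.

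Next, I enumerate chains of length three with a single descent in $\{1,2\}$, organized by the incidence structure of their three transpositions. By Proposition~\ref{P:properties}~(2) it suffices to consider each incidence pattern once, up to order-preserving relabeling. Using Remark~\ref{disjoint} to distinguish connected from disjoint pairs of transpositions, the non-fixed configurations fall into three families. In family~\textbf{(A)}, all three transpositions share a common index $\beta$, and a single substitution of type~$(i)$ or~$(ii)$ reverses the descent in $\{1,2\}$. In family~\textbf{(B)}, two consecutive transpositions in the chain are disjoint, and a single substitution of type~$(iii)$ applied to that pair reverses the descent. In family~\textbf{(C)}, a transposition $t_{pq}$ with $\beta<p<q<\gamma$ is disjoint from each of two consecutive connected transpositions $t_{\alpha\beta},t_{\beta\gamma}$, and two applications of substitution~$(iii)$ slide $t_{pq}$ past both. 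These are precisely the rules labeled (A), (B), (C) in the statement; computing the label sequences before and after confirms the descent-swapping behaviour.

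The next step is to verify that these three families, together with the fixed chains, exhaust all length-three chains with a single descent in $\{1,2\}$. Given a non-fixed $c$ with descent only at position~$1$, the incidence pattern of $(t_1,t_2,t_3)$ is one of: three transpositions sharing a common index (case~(A)); a consecutive disjoint pair $(t_1,t_2)$ or $(t_2,t_3)$ whose swap creates a descent at position~$2$ (case~(B)); or both $(t_1,t_2)$ and $(t_2,t_3)$ connected with no single substitution reversing the descent, in which case the only configuration avoiding the forbidden subsequences~(iv) and~(v) of Proposition~\ref{P:substitutions} is that of case~(C). The symmetric analysis handles descent only at position~$2$, and uniqueness of the minimal-substitution target in each case is immediate once the incidence pattern is fixed.

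Finally, uniformity and reversibility follow from the symbolic form of the rules. Each rule is stated in terms of abstract indices $\alpha<\beta<\gamma<\delta$ or $\alpha<\beta<p<q<\gamma$, so the involution commutes with any order-preserving relabeling $\iota_I$. For reversibility, reversing a chain $(t_1,t_2,t_3)\mapsto(t_3,t_2,t_1)$ exchanges substitutions $(i)$ and $(ii)$ while fixing $(iii)$; inspecting the statement shows that the two sub-rules of (A) are interchanged by reversal, while (B) and (C) are each stable under reversal with appropriate index relabeling. The main obstacle is the case analysis in the enumeration step: one must verify that (A), (B), (C) really do cover every possible incidence pattern of a length-three chain with a single descent in $\{1,2\}$, and that no chain admits a valid pairing with fewer substitutions than prescribed. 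This reduces to finite bookkeeping with Proposition~\ref{P:substitutions} and Remark~\ref{disjoint}.
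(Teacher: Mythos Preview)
Your overall strategy---classify length-three chains by the incidence pattern of their transpositions and match each to one of rules~\textbf{(A)}, \textbf{(B)}, \textbf{(C)}---is the same as the paper's, which instead organizes the case analysis by the five isomorphism types of rank-three intervals in the Grassmannian--Bruhat order.  Your second paragraph correctly describes what each rule does.  The gap is in your exhaustiveness argument (third paragraph).

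You claim that once case~\textbf{(A)} and the ``good'' instances of~\textbf{(B)} are handled, the only remaining non-fixed chains have \emph{both} consecutive pairs $(t_1,t_2)$ and $(t_2,t_3)$ connected.  This is false.  Take the chain $c=(t_{pq},t_{\alpha\beta},t_{\beta\gamma})$ with $\alpha<\beta<p<q<\gamma$.  Here $(t_1,t_2)=(t_{pq},t_{\alpha\beta})$ is a \emph{disjoint} pair (since $\beta<p$), so your trichotomy would try to place $c$ under~\textbf{(B)}; but swapping that pair gives $(t_{\alpha\beta},t_{pq},t_{\beta\gamma})$ with label sequence $(\beta,q,\gamma)$ and descent set $\emptyset$, not $\{2\}$.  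So $c$ has a consecutive disjoint pair, does not satisfy your hypothesis for~\textbf{(B)}, and does \emph{not} have both adjacent pairs connected---your trichotomy simply omits it.  This is exactly the configuration rule~\textbf{(C)} handles (as your second paragraph says), but your case split in the third paragraph never reaches it.

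The paper closes this by analyzing, for the interval type in which one transposition $t_{pq}$ is disjoint from a connected pair $t_{\alpha\beta},t_{\beta\gamma}$, the five possible relative positions of $\{p,q\}$ with respect to $\alpha,\beta,\gamma$.  In four of them the two non-fixed chains are one substitution~$(iii)$ apart (rule~\textbf{(B)}); in the fifth, $\beta<p<q<\gamma$, they are two substitutions apart, forcing rule~\textbf{(C)}.  You need an equivalent subcase analysis---either by interval type as in the paper, or by correcting your trichotomy to read ``a consecutive disjoint pair exists but no single substitution produces the opposite descent''---before you can assert that~\textbf{(C)} covers the residual cases and that two substitutions is indeed minimal there.
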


\begin{proof}
 A dual equivalence on the set $C_3$ of chains of length three in the
 Grassmannian-Bruhat order is given by an involution $\varphi_2$ on $C_3$ satisfying
 Conditions (i) and (ii.b) of Definition~\ref{D:strong_dual_equivalence}, for the 
 other conditions are automatically satisfied on chains of length three.
 That is, if $c$ is a chain of length three in an interval $[e,\zeta]_\preceq$, then
 $\varphi_2(c)$ is another chain of length three in $[e,\zeta]_\preceq$, and we have 
\[
  \begin{array}{rcl}
    \mbox{\rm (i)}&\varphi_2(c)=c\ \Longleftrightarrow\ 
     \Des(c)=\emptyset \mbox{ or } \Des(c)=\{1,2\}.\\
    \mbox{\rm (ii.b)}&\varphi_2(c)\neq c\ \Longleftrightarrow\ 
     \Des(c)=\{1\} \mbox{ and } \Des(\varphi_2(c))=\{2\}
      \mbox{ or vice-versa}.
  \end{array}
\]
 In both cases, the quasisymmetric function of the chains matched by $\varphi_2$ is
 symmetric, so Condition (iv.a) is satisfied and the other conditions are vacuous for
 chains of length three.

 Ignoring labels, there are exactly five isomorphism classes of intervals in the
 Grassmannian-Bruhat order of rank three.
\[
  (a)\ \raisebox{-20pt}{\includegraphics{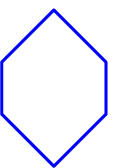}}\qquad
  (b)\ \raisebox{-20pt}{\includegraphics{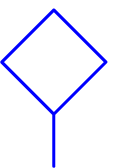}}\qquad
  (c)\ \raisebox{-20pt}{\includegraphics{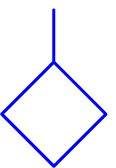}}\qquad
  (d)\ \raisebox{-20pt}{\includegraphics{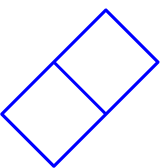}}\qquad
  (e)\ \raisebox{-20pt}{\includegraphics{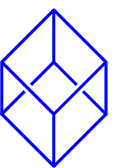}}
\]
 Intervals of type $(a)$ come from a substitution~$(i)$
   or~$(ii)$ from Proposition~\ref{P:substitutions}.
 Observe that if $\alpha<\beta<\gamma<\delta$, then 
\[
   \Des(t_{\alpha\gamma},t_{\gamma\delta},t_{\beta\gamma})\ =\ \{2\}
   \qquad\mbox{while}\qquad
   \Des(t_{\beta\gamma},t_{\alpha\beta},t_{\beta,\delta})\ =\ \{1\}
\]
 in the case of~$(i)$, and similarly
 for~$(ii)$. 
 It follows that $\varphi_2$ must interchange the two chains in intervals of type $(a)$, 
 which gives the rule {\bf (A)}.

 An interval of type $(b)$ will have two chains, necessarily of the form
 $(t_{pq},t_{\alpha\beta},t_{\gamma\delta})$ and $(t_{pq},t_{\gamma\delta},t_{\alpha\beta})$ where
 $t_{\alpha\beta}$ and $t_{\gamma\delta}$ are disjoint (so the diamond is formed by a
 substitution~$(iii)$), but neither is disjoint from $t_{pq}$. 
 To distinguish these chains, suppose that $\beta<\delta$.
 By the Prohibition~\eqref{Eq:ZeroRelations}$(iv)$, $p\in\{\beta,\delta\}$ and
 $q\in\{\alpha,\gamma\}$.
 As $p<q$, the only possibility is that $p=\beta$ and $q=\gamma$, and
 $\alpha<\beta<\gamma<\delta$.  
 Then the descent sets of the chains are
 $\Des(t_{\beta\gamma},t_{\alpha\beta},t_{\gamma\delta})=\{1\}$ 
 and $\Des(t_{\beta\gamma},t_{\gamma\delta},t_{\alpha\beta})=\{2\}$, and we must have 
\[
   \varphi_2\ \colon\  (t_{\beta\gamma},t_{\alpha\beta},t_{\gamma\delta})
    \ \longleftrightarrow\ (t_{\beta\gamma},t_{\gamma\delta},t_{\alpha\beta})\,.
\]
 Similarly, there is one choice for the chains in an interval of type $(c)$, and we must
 have  
\[
   \varphi_2\ \colon\  (t_{\gamma\delta},t_{\alpha\beta},t_{\beta\gamma})
    \ \longleftrightarrow\ (t_{\alpha\beta},t_{\gamma\delta},t_{\beta\gamma})\,.
\]
 These are both included in rule {\bf (B)}.

 An interval of type $(d)$ has one transposition disjoint from the other two
 (both diamonds are formed by substitutions~$(iii)$), but the
 other two are connected (do not commute), and have either the form
 $t_{\alpha\beta},t_{\beta\gamma}$ or the form $t_{\beta\gamma},t_{\alpha\beta}$ with 
 $\alpha<\beta<\gamma$.
 We consider chains whose connected permutations have the first type.

 The third transposition is $t_{pq}$ with one of the five inequalities holding:
 \begin{eqnarray*}
  &{\rm (1)}\  q<\alpha\,, \qquad
   {\rm (2)}\ \alpha<p<q<\beta\,, \qquad
   {\rm (3)}\ \beta<p<q<\gamma\,,& \\
  &{\rm (4)}\ \gamma<p\,,\qquad\mbox{or}\qquad
   {\rm (5)}\ p<\alpha\,,\ \gamma<q\,.&
 \end{eqnarray*}
 The three chains in this interval will be 
 $(t_{pq},t_{\alpha\beta},t_{\beta\gamma})$, 
 $(t_{\alpha\beta},t_{pq},t_{\beta\gamma})$, and 
 $(t_{\alpha\beta},t_{\beta\gamma},t_{pq})$,
 with each obtained from the previous by one application of the
 substitution~$(iii)$. 
 For each of the five types of chains, the three descent sets are as follows, respectively
 \begin{eqnarray*}
  &{\rm (1)}\   \emptyset,\{1\},\{2\}\,, \qquad
   {\rm (2)}\  \emptyset,\{1\},\{2\}\,, \qquad
   {\rm (3)}\ \{1\},\emptyset,\{2\}\,,& \\
  &{\rm (4)}\  \{1\},\{2\},\emptyset\,,\qquad\mbox{or}\qquad
   {\rm (5)}\   \{1\},\{2\},\emptyset\,.&
 \end{eqnarray*}
 For all types except (3), the chains with descent sets $\{1\}$ and $\{2\}$ are obtained from
 each other by a single substitution~$(iii)$, and these are again
 covered by rule {\bf (B)}. 
 For the remaining type (3), $\varphi_2$ must involve two applications of the
 substitution~$(iii)$, so that 
 \[
  \varphi_2\ \colon\ 
   (t_{pq},t_{\alpha\beta},t_{\beta\gamma})
    \ \longleftrightarrow\ 
   (t_{\alpha\beta},t_{\beta\gamma},t_{pq})\,,
 \]
 and this is covered by rule {\bf (C)}.
 The case when the connected permutations are $t_{\beta\gamma},t_{\alpha\beta}$ is identical
 upon reversing the chain.
 
 In the remaining case of an interval of type $(e)$, the three transpositions are
 disjoint. 
 Writing them as $t_{a\alpha}$, $t_{b\beta}$, and $t_{c\gamma}$ with
 $\alpha<\beta<\gamma$, the six chains are related by
 substitutions~$(iii)$, and they form 
 a hexagon (as in the weak order on $S_3$).
 Below we display the chains, their descent sets, and indicate the
 substitutions~$(iii)$ by two-headed arrows.
\[
  \begin{picture}(366,97)(0,-1)
    \put(0 ,50){$(t_{a\alpha},t_{b\beta},t_{c\gamma})$}
    \put(30,37){\small$\emptyset$}
     
    \put(61,62){\vector(3,2){38}}
    \put(91,82){\vector(-3,-2){30}}
    \put(91,25){\vector(-3,2){30}}
    \put(61,45){\vector(3,-2){38}}

    \put(102,87){$(t_{a\alpha},t_{c\gamma},t_{b\beta})$}
    \put(124,73){\small$\{2\}$}

    \put(102,14){$(t_{b\beta},t_{a\alpha},t_{c\gamma})$}
    \put(124,0){\small$\{1\}$}

    \put(166,90){\vector(1,0){34}}
    \put(196,90){\vector(-1,0){30}}
    \put(196,17){\vector(-1,0){30}}
    \put(166,17){\vector(1,0){34}}

    \put(204,87){$(t_{c\gamma},t_{a\alpha},t_{b\beta})$}
    \put(226,73){\small$\{1\}$}

    \put(204,14){$(t_{b\beta},t_{c\gamma},t_{a\alpha})$}
    \put(226,0){\small$\{2\}$}

    \put(305,62){\vector(-3,2){38}}
    \put(275,82){\vector(3,-2){30}}
    \put(275,25){\vector(3,2){30}}
    \put(305,45){\vector(-3,-2){38}}

    \put(306,50){$(t_{c\gamma},t_{b\beta},t_{a\alpha})$}
    \put(324,37){\small$\{1,2\}$}
 \end{picture}
\]
 As we seek a rule for $\varphi_2$ which uses a minimal number of the substitutions $(i)$,
 $(ii)$, and $(iii)$ from Proposition~\ref{P:substitutions}, we must have that $\varphi_2$
 is the involution given by the two horizontal arrows, which is again 
 included in rule {\bf (B)}.
\end{proof}

 Consequently, there is at most one dual equivalence on chains in the Grassmannian-Bruhat
 order that is local and uniform and given by a minimal number of substitutions, and that
 dual equivalence is reversible. 

\begin{definition}\label{D:wde}
 Suppose that $\zeta$ is a permutation of rank $n$.
 Define involutions $\varphi_i$ for $2\leq i\leq n{-}1$ on chains in $[e,\zeta]_\preceq$
 where $\varphi_i(c)=c$ either if the chain $c$ has a descent in both positions $i{-}1$
 and $i$ or $c$ has no descents in those positions.
 If $c$ has exactly one descent in positions $i{-}1$ and $i$, then $\varphi_i(c)$ is
 obtained from $c$ by applying the involution $\varphi_2$ of Lemma~\ref{L:locality} to the
 transpositions in $c$ at positions $i{-}1$, $i$, and $i{+}1$.
 These involutions $\varphi_i$ act locally and are uniform and reversible, and are the
 unique such involutions for which $c$ and $\varphi_i(c)$ differ by a minimal number of
 substitutions of Proposition~\ref{P:substitutions}.
\end{definition}

\begin{remark}\label{R:Knuth}
 Any interval in Young's lattice is naturally an interval in the Grassmannian-Bruhat order.
 For such an interval, all substitutions have type~$(iii)$ and
 only rule {\bf (B)} applies.
 In this case, it is just the Knuth relation,
 $(\beta,\alpha,\gamma)\leftrightarrow(\beta,\gamma,\alpha)$
 or $(\gamma,\alpha,\beta)\leftrightarrow(\alpha,\gamma,\beta)$, 
 where $\alpha<\beta<\gamma$, which gives Haiman's dual equivalence.

 Whenever rule {\bf (B)} applies, it is simply the Knuth relation on 
 the labels of the chains.
\end{remark}

\begin{example}
 Figure~\ref{F:124536_graph} displays how the involutions $\varphi_2$, $\varphi_3$, and
 $\varphi_4$ act on the nine chains of length five from Figure~\ref{F:124536} in
 Example~\ref{Ex:124536}.
 We indicate the descent set by placing dots at the descents, and indicate which
 involution $\varphi_i$ and rule {\bf (A)}, {\bf (B)}, or {\bf (C)} applies to each edge.
\begin{figure}[htb]
  \begin{picture}(350,115)\thicklines

  \put(  0,100){$t_{36}\dt t_{23}t_{45}\dt t_{34}\dt t_{12}$}
   \put( 70,102){\MyCyan{\line(1,0){65}}}
   \put(100,106){\MyCyan{$\varphi_2$}}
   \put( 98,91){\MyCyan{{\scriptsize\bf (C)}}}
  \put(140,100){$t_{45}t_{36}\dt t_{23}t_{34}\dt t_{12}$}
   \put(210,102){\MyGreen{\line(1,0){65}}}
   \put(240,106){\MyGreen{$\varphi_4$}}
   \put(238, 91){\MyGreen{{\scriptsize\bf (B)}}}
  \put(280,100){$t_{45}t_{36}\dt t_{23}\dt t_{12}t_{34}$}

  \put(  0,50){$t_{36}\dt t_{45}\dt t_{23}t_{34}\dt t_{12}$}
   \put( 33,62){\MyMag{\line(0,1){30}}}
   \put( 18,75){\MyMag{$\varphi_3$}}
   \put( 35,75){\MyMag{{\scriptsize\bf (B)}}}
  \put(140,50){$t_{45}\dt t_{34}t_{46}\dt t_{24}\dt t_{12}$}
   \put(173,62){\MyMag{\line(0,1){30}}}
   \put(176,75){\MyMag{$\varphi_3$}}
   \put(153,75){\MyMag{{\scriptsize\bf (A)}}}
  \put(280,50){$t_{36}\dt t_{23}t_{45}\dt t_{12}t_{34}$}
   \put(313,62){\MyCyan{\line(0,1){30}}}
   \put(316,75){\MyCyan{$\varphi_2$}}
   \put(292,75){\MyCyan{{\scriptsize\bf (C)}}}

  \put(  0,0){$t_{36}\dt t_{45}\dt t_{23}\dt t_{12}t_{34}$}
   \put( 33,12){\MyGreen{\line(0,1){30}}}
   \put( 18,24){\MyGreen{$\varphi_4$}}
   \put( 35,24){\MyGreen{{\scriptsize\bf (B)}}}
  \put(140,0){$t_{35}t_{56}\dt t_{45}\dt t_{24}\dt t_{12}$}
   \put(173,12){\MyCyan{\line(0,1){30}}}
   \put(176,24){\MyCyan{$\varphi_2$}}
   \put(153,24){\MyCyan{{\scriptsize\bf (A)}}}
  \put(280,0){$t_{36}\dt t_{23}\dt t_{12}t_{45}\dt t_{34}$}
   \put(311,12){\MyMag{\line(0,1){30}}}
   \put(296,24){\MyMag{$\varphi_3$}}
   \put(276,24){\MyMag{{\scriptsize\bf (B)}}}
   \put(315,12){\MyGreen{\line(0,1){30}}}
   \put(318,24){\MyGreen{$\varphi_4$}}
   \put(333,24){\MyGreen{{\scriptsize\bf (B)}}}
  \end{picture}
 \caption{The graph from the chains in Figure~\ref{F:124536}.}\label{F:124536_graph}
\end{figure}
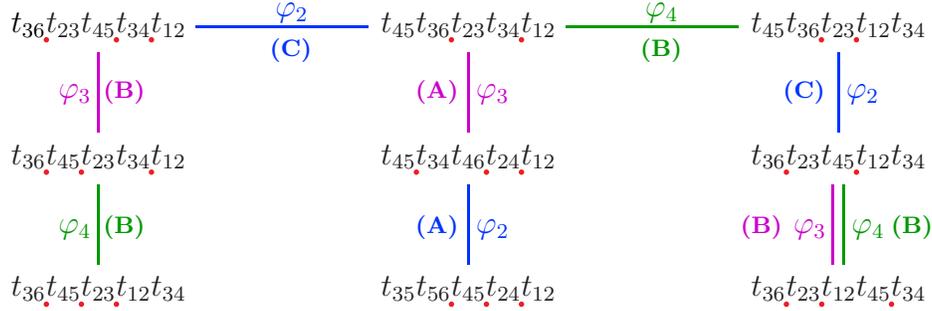
 Below we write the graph, labeling the vertices by the numbers of the
 corresponding chains of Figure~\ref{F:124536} and the edges with $\varphi_i$.
\[
  \begin{picture}(220,49)(-6,1)
   \put(  0, 7){\includegraphics{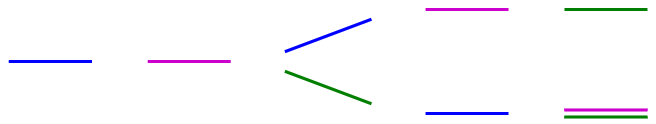}}

   \put( 17,29){\MyCyan{$\varphi_2$}}
   \put( 57,29){\MyMag{$\varphi_3$}}

   \put( 96,38){\MyCyan{$\varphi_2$}}
   \put( 96,10){\MyGreen{$\varphi_4$}}
   \put(137,45){\MyMag{$\varphi_3$}}
   \put(137, 3){\MyCyan{$\varphi_2$}}
   \put(177,45){\MyGreen{$\varphi_4$}}
   \put(177, 2){\MyGreen{$\varphi_4$}}
   \put(177,16){\MyMag{$\varphi_3$}}

   \put( -5.7,21.5){$(1)$}
   \put( 34.6,21.5){$(2)$}
   \put( 74.6,21.5){$(3)$}

   \put(114.7, 6.7){$(5)$}
   \put(114.7,36.7){$(7)$}
   \put(154.9, 6.7){$(8)$}
   \put(154.9,36.7){$(4)$}
   \put(195.4, 6.7){$(9)$}
   \put(195.4,36.7){$(6)$}
  \end{picture}
\]

 For the eleven chains in the interval $[e,(1,4,5,3,2,6)]_\preceq$ of Example~\ref{Ex:143256}
 we have the following graph, where the vertex labels $(1)$---$(11)$ are the same as
 in~\eqref{Eq:SubstitutionGraph}, and the edges are labeled
 with the involution $\varphi_i$ and rule {\bf (A)}, {\bf (B)}, or {\bf (C)} for that edge.
\[
   \begin{picture}(380,112)(-6,-1)
    \put(0,0){\includegraphics{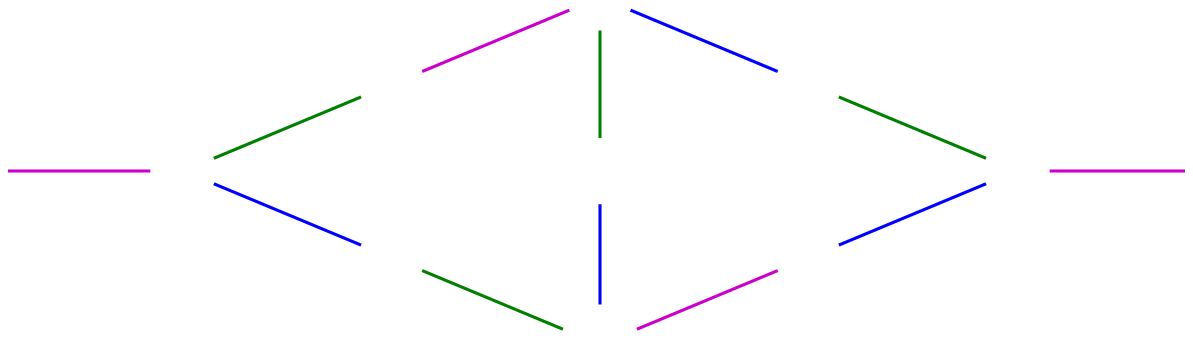}}
    \put( -5.7,51.5){$(6)$}
    \put( 54.6,51.5){$(7)$}
    \put(114.9,76.5){$(1)$}
    \put(114.9,26.5){$(9)$}
    \put(174.9,101.5){$(2)$}
    \put(174.9,51.5){$(8)$}
    \put(172.9, 1.5){$(10)$}
    \put(235.1,76.5){$(3)$}
    \put(236.3,26.5){$(5)$}
    \put(295.4,51.5){$(4)$}
    \put(354.7,51.5){$(11)$}

    \put( 27,60){\MyMag{$\varphi_3$}}
    \put( 25,46){\MyMag{\scriptsize\bf(A)}}
    \put( 84,73){\MyGreen{$\varphi_4$}}
    \put( 91,60){\MyGreen{\scriptsize\bf(A)}}
    \put( 91,45){\MyCyan{$\varphi_2$}}
    \put( 83,32){\MyCyan{\scriptsize\bf(B)}}

    \put(145,98){\MyMag{$\varphi_3$}}
    \put(149,84){\MyMag{\scriptsize\bf(C)}}
    \put(149,22){\MyGreen{$\varphi_4$}}
    \put(143, 8){\MyGreen{\scriptsize\bf(A)}}

    \put(169,75){\MyGreen{$\varphi_4$}}
    \put(184,75){\MyGreen{\scriptsize\bf(B)}}
    \put(169,32){\MyCyan{$\varphi_2$}}
    \put(184,32){\MyCyan{\scriptsize\bf(C)}}

    \put(210,96){\MyCyan{$\varphi_2$}}
    \put(204,83){\MyCyan{\scriptsize\bf(A)}}
    \put(205,22){\MyMag{$\varphi_3$}}
    \put(209, 8){\MyMag{\scriptsize\bf(A)}}

    \put(269,73){\MyGreen{$\varphi_4$}}
    \put(259,60){\MyGreen{\scriptsize\bf(B)}}
    \put(260,45){\MyCyan{$\varphi_2$}}
    \put(269,34){\MyCyan{\scriptsize\bf(C)}}

    \put(326,60){\MyMag{$\varphi_3$}}
    \put(324,46){\MyMag{\scriptsize\bf(B)}}
   \end{picture}
\]
\end{example}

Our main Theorem~\ref{Th:Main} is a consequence of the following result.

\begin{theorem}\label{Th:DualEquivalance}
  The involutions $\varphi_i$ for $2\leq i\leq n{-}1$ of Definition~$\ref{D:wde}$ form a 
  dual equivalence on the set of chains in an interval $[e,\zeta]_\preceq$ of rank $n$
  in the Grassmannian-Bruhat order.
\end{theorem}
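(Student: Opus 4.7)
The plan is to verify each condition of Definition~\ref{D:dual_equivalence} in turn for the involutions $\varphi_i$ of Definition~\ref{D:wde}. Conditions (i) and (ii.b) are built into the construction of Lemma~\ref{L:locality}: rules \textbf{(A)}, \textbf{(B)}, \textbf{(C)} each pair a chain with a single descent at position $i-1$ with one having a single descent at position $i$, and leave fixed any chain whose window at positions $i-1, i, i+1$ is descent-free or has a double descent. Condition (ii.a) follows from locality (only positions $i-1, i, i+1$ are altered, so the descent change is confined to $\{i-2, i-1, i, i+1\}$), and (ii.c), (ii.d) follow from an inspection of the three rules, which shows that any change of descent status at position $i-2$ or $i+1$ leaves the resulting chain active under $\varphi_{i-1}$ or $\varphi_{i+1}$ respectively. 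Condition (iii) is immediate from locality and uniformity: when $|i-j|\geq 3$ the windows on which $\varphi_i$ and $\varphi_j$ operate are disjoint, so they commute.

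Next I would reduce conditions (iv.a), (iv.b), and (iv.c) to a finite check. By Remark~\ref{R:local} each depends only on involutions $\varphi_k$ with $k$ in a window of size at most four, and each $\varphi_k$ reads only positions $k-1, k, k+1$ of the chain; so only chains of length at most six are relevant. By the relabeling invariance of Proposition~\ref{P:properties}(2), every such chain is label-equivalent to one whose transpositions use indices from a bounded set (say $\{1, \dotsc, 12\}$). I would therefore enumerate a finite list of representative intervals $[e,\zeta]_\preceq$ of rank at most six with $\zeta$ in a small symmetric group, generate all chains in each using Proposition~\ref{P:substitutions}, compute the action of each $\varphi_i$ via rules \textbf{(A)}, \textbf{(B)}, \textbf{(C)}, and test the three conditions directly. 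The reversal isomorphism of Proposition~\ref{P:properties}(3), together with Remarks~\ref{R:symmetry} and~\ref{R:GBO}, roughly halves the burden, since the verification on $[e,\zeta]_\preceq$ transfers automatically to $[e,\zeta^{-1}]_\preceq$ by the reversal and complementation symmetries.

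The principal obstacle is condition (iv.c). A flat $i$-chain $c_1, \dotsc, c_{2r}$ interleaves $\varphi_{i-2}, \varphi_{i-1}, \varphi_i$ and can be arbitrarily long, while the condition is a propagation dichotomy about $\varphi_{i+1}$-fixity. I would first argue that this propagation is detected locally, so that a violation in any flat chain forces a violation on some short flat sub-chain living inside a rank-at-most-six interval; the condition then fits inside the enumeration above. The genuinely delicate step is tracking rule \textbf{(C)}-transitions, which are built from two type~$(iii)$ substitutions and are therefore not atomic in the substitution graph of Proposition~\ref{P:substitutions}: these are the only transitions where the edge of the dual-equivalence graph does not correspond to a single edge in the underlying Grassmannian-Bruhat interval, and they are the most plausible source of a subtle failure of (iv.c). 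Once the local reduction is in place, the remaining verification is a straightforward, if substantial, computer enumeration, which completes the proof and, via~(\ref{Eq:Skew_Schubert}), yields Theorem~\ref{Th:Main}.
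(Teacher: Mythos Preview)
Your proposal follows essentially the same route as the paper: conditions (i)--(iii) are handled by the built-in properties of the rules in Lemma~\ref{L:locality} together with locality, and conditions (iv.a)--(iv.c) are reduced via locality and Proposition~\ref{P:properties} to a finite computer enumeration over intervals of rank at most six in $S_{12}$. One small clarification: your worry about flat $i$-chains being ``arbitrarily long'' is unnecessary once the reduction to rank-six intervals is made, since each such interval has only finitely many chains and hence only finitely many flat chains of any length $r$; there is no separate local-to-global argument needed for the parameter $r$, and the paper simply checks the condition on all flat chains in all rank-six intervals directly.
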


\begin{proof}
 Conditions (i), (ii.a), (ii.b), and (iii) of Definition~\ref{D:strong_dual_equivalence} for a
 dual equivalence hold immediately by the definition of $\varphi_i$ and
 Lemma~\ref{L:locality}. 

 For Condition (ii.c), suppose that exactly one of the chains $c$ and $\varphi_i(c)$ has a
 descent at position $i{-}2$.
 Then $c\neq\varphi_i(c)$ and exactly one of $c$ and $\varphi_i(c)$ has a descent at
 position $i{-}1$, by Condition (ii.b).
 Without any loss, assume that $c$ has a descent at position $i{-}1$ and thus it has no
 descent at position $i$.
 Then, in the positions $i{-}1,i,i{+}1$, the chain $c$ is one of the triples in the first
 column in the rules {\bf (A)}, {\bf (B)}, {\bf (C)} used to define of $\varphi_i$.
 Since exactly one of $c$ and $\varphi_i(c)$ has a descent at position $i{-}2$, that one
 must be $\varphi_i(c)$, as the label on the transposition of $c$ in position $i{-}1$ is
 at least as large as the corresponding label for $\varphi_i(c)$.
 It follows that in positions $i{-}2$ and $i{-}1$, $c$ has a descent only at $i{-}1$
 and $\varphi_i(c)$ has a descent only at $i{-}2$.
 Therefore, by Property (i), $\varphi_{i-1}(c)\neq c$ and
 $\varphi_{i-1}(\varphi_i(c))\neq\varphi_i(c)$, which shows (ii.c).
 The Condition (ii.d) follows by reversing the chains and using the reversibility of the
 involutions $\varphi_i$.

 Consider now the remaining conditions (iv.a), (iv.b), and (iv.c) of    
 Definition~\ref{D:dual_equivalence}.
 These conditions may be verified by checking all subchains of all chains, where the
 subchains have lengths up to five for (iv.a) and (iv.b), and those of length six for
 (iv.c).
 By Proposition~\ref{P:properties}(1), this is equivalent to checking all chains in
 intervals $[e,\zeta]_\preceq$ in the Grassmannian-Bruhat order of ranks up to five and
 six, as the involutions $\varphi_i$ are local.
 As the involutions are uniform, this is a finite set of intervals.
 Indeed, if $\zeta$ is a permutation of rank six, then $\zeta$ is the product of six
 transpositions, and these transpositions involve at most twelve different numbers.
 Thus $\zeta=\iota_I(\eta)$ for some permutation $\eta$ in $S_{12}$.

 Thus we may complete the proof by generating all chains in the Grassmannian-Bruhat order
 of lengths up to six, up to the equivalence $\zeta$ is equivalent to $\iota_I(\eta)$.
 We have done just that and have written software that generates the
 chains and verifies Conditions (iv.a), (iv.b), and (iv.c).
 There are 1236 equivalence classes of chains of length four, 29400 equivalence classes of
 chains of length five, and 881934 equivalence classes of chains of length six.
 Given this set of equivalence classes of chains of length $n$ for $n=4,5,6$, the software
 determines the involutions $\varphi_i$ for $i=2,\dotsc,n{-}1$.
 From these data, the software determines all connected colored graphs $\calG$ whose
 vertices are this set of chains where $c$ and $\varphi_i(c)$ are connected by an edge of
 color $i$ if $c\neq\varphi_i(c)$.
 Then these graphs are partitioned into isomorphism classes, where the isomorphism
 respects the edge labels and descent sets of the vertices.
 Finally, the conditions (iv.a), (iv.b), and (iv.c) are checked on representatives of these
 isomorphism classes.
 Thus these graphs are all dual equivalence graphs, which completes the proof.
\end{proof}

 The software for these tasks is available on the web\fauxfootnote{{\tt
     http://www.math.tamu.edu/\~{}sottile/research/pages/positivity/}} , as well as
 documentation, the sets of chains, connected dual equivalence graphs, and isomorphism
 class representatives. 
 Some of this verification may be done by hand or inspection and it is not necessary to
 generate all chains of length six to verify (iv.c).
 We discuss this more in detail, including giving all dual equivalence graphs of chains of
 lengths four and five, as well as one for chains of length six in the subsections that
 follow. 
 The numbers of chains, graphs, and classes of graphs is displayed in Table~\ref{T:classes}.
\begin{table}[htb]
 \caption{Numbers of chains and graphs}\label{T:classes}
 \begin{tabular}{|l||r|r|r|r||}\hline
   &\multicolumn{4}{|c|}{Numbers}\\\hline
   $n$&Chains&dual equivalence graphs&Isomorphism Classes& $\{\omega,\rho\}$-classes\\\hline\hline
    4 &  1236&  499&  7& 4\\\hline
    5 & 29500& 5948& 28&12\\\hline
    6 &881934&82294&178&73\\\hline
%
%
 \end{tabular}
\end{table}

\subsection{Chains of pairwise disjoint transpositions}
 As we noted in Remark~\ref{disjoint}, a valid chain of length $n$ in the Grassmannian
 Bruhat order consisting of pairwise disjoint transpositions is equivalent to one in
 $S_{2n}$.
 These are in turn given by one of $n!$ orderings of the pairs in a complete non-crossing
 matching on $[2n]$, and the number of such matchings is the Catalan number $C_n$.
 These are in bijection with parenthesizations of n objects, and thus with the vertices of
 the associahedron and with plane binary trees.
 Thus up to equivalence there are $n! C_n$ chains in the Grassmannian Bruhat order
 consisting of pairwise disjoint transpositions.

 As observed in Remark~\ref{R:Knuth}, the involutions on such a chain (which come from
 rule {\bf (B)}) amount to applications of the Knuth relations on the labels of a chain.
 Consequently, the colored graph $\calG$ constructed from an interval $[e,\zeta]_\preceq$
 where $\zeta\in S_{2n}$ has rank $n$ and one chain (hence all chains) is composed of
 pairwise disjoint transpositions, is isomorphic to the colored graph $\calG_n$
 constructed from the symmetric group $S_n$ coming from Knuth equivalences.
 Since $\calG_n$ is a strong dual equivalence (each component corresponds to all the 
 Young tableaux of a given shape under Haiman's dual equivalence), we conclude that
 $\calG$ is a strong dual equivalence graph.

 Thus in the proof of Theorem~\ref{Th:DualEquivalance} we did not need a
 computer to verify the conditions for these chains of disjoint transpositions.
 Table~\ref{T:disjoint} shows the number of chains of rank $n$ for $n=3,4,5,6$ together
 with the numbers that are composed of pairwise disjoint transpositions.
\begin{table}[htb]
 \caption{Numbers of disjoint chains}\label{T:disjoint}
  \begin{tabular}{|c||r|r|r|r|}\hline
   $n$&3&4&5&6\\\hline
   $n!C_n$&30&336&5040&95040\\\hline
   Number of Chains&70&1236&29500&881934\\\hline
  \end{tabular}
\end{table}

By Proposition~\ref{P:properties} there is a bijection $c\leftrightarrow c^\vee$, given by
reversal, between chains in $[e,\zeta]_\preceq$ and chains in $[e,\zeta^{-1}]_\preceq$.
Since $\Des(c^\vee)=\rho\omega\Des(c)$, for every graph $\calG$ coming from a set of
chains in the Grassmannian-Bruhat order, we have also the graph $\rho\omega\calG$ coming
from reversal of the chains giving $\calG$.
Thus we will always obtain both quasisymmetric functions $K_\calG$ and 
$K_{\rho\omega\calG}=\omega K_\calG$.

\subsection{Dual equivalence graphs from chains of length four}\label{SS:four}
There are four $\{\rho,\omega\}$-classes of dual equivalence graphs coming from chains of length 4.
\[
  \begin{picture}(8,20)
   \put(0,7){\includegraphics{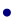}}
  \end{picture}
 \qquad
  \begin{picture}(36,20)
   \put( 0, 7){\includegraphics{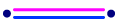}}
   \put(15, 0){\MyCyan{\scriptsize$2$}}
   \put(15,13){\MyMag{\scriptsize$3$}}
  \end{picture} 
    \qquad
   \begin{picture}(64,20)
   \put( 0, 7){\includegraphics{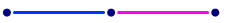}}
   \put(15,12){\MyCyan{\scriptsize$2$}}
   \put(45,12){\MyMag{\scriptsize$3$}}
  \end{picture} 
    \qquad
  \begin{picture}(124,20)
   \put(  0,7){\includegraphics{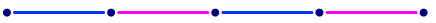}}
   \put( 15,12){\MyCyan{\scriptsize$2$}}
   \put( 45,12){\MyMag{\scriptsize$3$}}
   \put( 75,12){\MyCyan{\scriptsize$2$}}
   \put(105,12){\MyMag{\scriptsize$3$}}
  \end{picture}
\]
The first three are strong dual equivalence graphs, and these four give the symmetric
functions
\[
   s_{(4)}\,,\ s_{(2,2)}\,,\ s_{(3,1)}\,,\ \mbox{ and }\ s_{(2,2)}+s_{(3,1)}\,,
\]
respectively, where we write only one of $K_{\calG}$ and 
$K_{\rho\omega\calG}=\omega K_{\calG}$ when the two functions differ.

\subsection{Dual equivalence graphs from chains of length five}\label{SS:five}
There are twelve $\{\rho,\omega\}$-classes of dual equivalence graphs coming from chains of length
five, which we give in Figure~\ref{F:GFive}.
\begin{figure}[htb]
\[
  \begin{picture}(8,25)(0,-5)
   \put( 0,5){\includegraphics{1.eps}}
  \end{picture}
  \quad
  \begin{picture}(92,25)(0,-5)
   \put( 0,5){\includegraphics{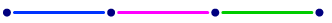}}
   \put(14,10){\MyCyan{\scriptsize$2$}}
   \put(44,10){\MyMag{\scriptsize$3$}}
   \put(74,10){\MyGreen{\scriptsize$4$}}
  \end{picture} 
    \quad
  \begin{picture}(122,22)(0,-2)
   \put(0,7){\includegraphics{g5_5.eps}}

   \put( 14, 1){\MyCyan{\scriptsize$2$}}
   \put( 14,13){\MyMag{\scriptsize$3$}}
   \put( 44,12){\MyGreen{\scriptsize$4$}}
   \put( 74,12){\MyCyan{\scriptsize$2$}}
   \put(104,13){\MyGreen{\scriptsize$4$}}
   \put(104, 1){\MyMag{\scriptsize$3$}}

  \end{picture}
  \quad
  \begin{picture}(124,26)
   \put(0,2){\includegraphics{g5_6.eps}}

   \put( 14,15){\MyMag{\scriptsize$3$}}
   \put( 42,19){\MyGreen{\scriptsize$4$}}
   \put( 42, 1){\MyCyan{\scriptsize$2$}}
   \put( 75,19){\MyCyan{\scriptsize$2$}}
   \put( 75, 1){\MyGreen{\scriptsize$4$}}
   \put(104,15){\MyMag{\scriptsize$3$}}
  \end{picture}\vspace{10pt}
\]
\[
  \begin{picture}(154,42)
   \put(  0, 6){\includegraphics{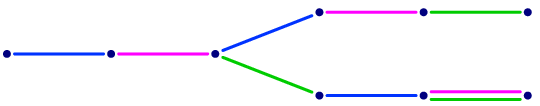}}
   \put( 14,23){\MyCyan{\scriptsize$2$}}
   \put( 44,23){\MyMag{\scriptsize$3$}}
   \put( 73,28){\MyCyan{\scriptsize$2$}}
   \put( 73, 7){\MyGreen{\scriptsize$4$}}
   \put(104, 1){\MyCyan{\scriptsize$2$}}
   \put(104,35){\MyMag{\scriptsize$3$}}

   \put(134, 0){\MyGreen{\scriptsize$4$}}
   \put(134,13){\MyMag{\scriptsize$3$}}
   \put(134,35){\MyGreen{\scriptsize$4$}}
  \end{picture}
   \qquad
  \begin{picture}(215,36)
   \put(  0,4.5){\includegraphics{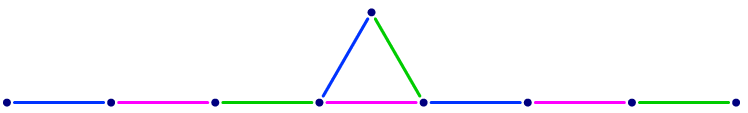}}
   \put( 14,0){\MyCyan{\scriptsize$2$}}
   \put( 44,0){\MyMag{\scriptsize$3$}}
   \put( 74,0){\MyGreen{\scriptsize$4$}}
   \put(104,0){\MyMag{\scriptsize$3$}}
   \put( 92,18){\MyCyan{\scriptsize$2$}}
   \put(117,18){\MyGreen{\scriptsize$4$}}
   \put(134,0){\MyCyan{\scriptsize$2$}}
   \put(164,0){\MyMag{\scriptsize$3$}}
   \put(194,0){\MyGreen{\scriptsize$4$}}
  \end{picture}\vspace{10pt}
\]
\[
  \begin{picture}(185,63)
   \put(  0, 7){\includegraphics{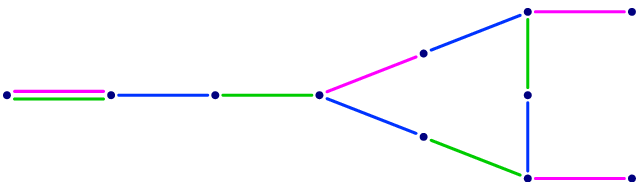}}
   \put( 14,35){\MyMag{\scriptsize$3$}}
   \put( 14,24){\MyGreen{\scriptsize$4$}}
   \put( 44,34){\MyCyan{\scriptsize$2$}}
   \put( 74,34){\MyGreen{\scriptsize$4$}}
   \put(103,40){\MyMag{\scriptsize$3$}}
   \put(103,19){\MyCyan{\scriptsize$2$}}
   \put(133,52){\MyCyan{\scriptsize$2$}}
   \put(133, 7){\MyGreen{\scriptsize$4$}}

   \put(154,40){\MyGreen{\scriptsize$4$}}
   \put(154,18){\MyCyan{\scriptsize$2$}}
   \put(164, 0){\MyMag{\scriptsize$3$}}
   \put(164,58){\MyMag{\scriptsize$3$}}
  \end{picture}
    \qquad
  \begin{picture}(185,57)
   \put( 0,  0){\includegraphics{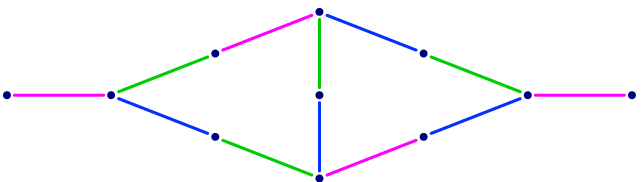}}
   \put( 14,27){\MyMag{\scriptsize$3$}}
   \put( 43,36){\MyGreen{\scriptsize$4$}}
   \put( 43,12){\MyCyan{\scriptsize$2$}}
   \put( 73,45){\MyMag{\scriptsize$3$}}
   \put( 73, 0){\MyGreen{\scriptsize$4$}}
   \put( 94,33){\MyGreen{\scriptsize$4$}}
   \put( 94,11){\MyCyan{\scriptsize$2$}}

   \put(106,45){\MyCyan{\scriptsize$2$}}
   \put(106, 0){\MyMag{\scriptsize$3$}}
   \put(136,36){\MyGreen{\scriptsize$4$}}
   \put(136,12){\MyCyan{\scriptsize$2$}}
   \put(164,27){\MyMag{\scriptsize$3$}}
  \end{picture}\vspace{10pt}
\]
\[
  \begin{picture}(223,33)
   \put(  0,0.5){\includegraphics{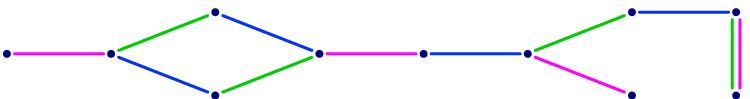}}
   \put( 14,15){\MyMag{\scriptsize$3$}}
   \put( 43,22){\MyGreen{\scriptsize$4$}}
   \put( 43, 0){\MyCyan{\scriptsize$2$}}
   \put( 75,22){\MyCyan{\scriptsize$2$}}
   \put( 75, 0){\MyGreen{\scriptsize$4$}}
   \put(104,15){\MyMag{\scriptsize$3$}}
   \put(134,15){\MyCyan{\scriptsize$2$}}
   \put(163,22){\MyGreen{\scriptsize$4$}}
   \put(163, 0){\MyMag{\scriptsize$3$}}
   \put(194,27){\MyCyan{\scriptsize$2$}}

   \put(206,10){\MyGreen{\scriptsize$4$}}
   \put(216,10){\MyMag{\scriptsize$3$}}
  \end{picture}\vspace{10pt}
\]
\[
  \begin{picture}(220,64)
   \put(0,7){\includegraphics{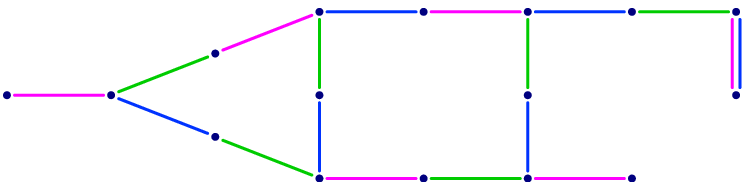}}
   \put( 14,34){\MyMag{\scriptsize$3$}}
   \put( 44,41){\MyGreen{\scriptsize$4$}}
   \put( 44,19){\MyCyan{\scriptsize$2$}}
   \put( 74,53){\MyMag{\scriptsize$3$}}
   \put( 74, 7){\MyGreen{\scriptsize$4$}}
   \put( 94,42){\MyGreen{\scriptsize$4$}}
   \put( 94,18){\MyCyan{\scriptsize$2$}}

   \put(104,58){\MyCyan{\scriptsize$2$}}
   \put(104, 0){\MyMag{\scriptsize$3$}}

   \put(134,58){\MyMag{\scriptsize$3$}}
   \put(134, 0){\MyGreen{\scriptsize$4$}}
   \put(154,42){\MyGreen{\scriptsize$4$}}
   \put(154,18){\MyCyan{\scriptsize$2$}}

   \put(164,58){\MyCyan{\scriptsize$2$}}
   \put(164, 0){\MyMag{\scriptsize$3$}}
   \put(194,58){\MyGreen{\scriptsize$4$}}

   \put(216,42){\MyCyan{\scriptsize$2$}}
   \put(205,42){\MyMag{\scriptsize$3$}}
  \end{picture}
    \qquad
  \begin{picture}(196,64)
   \put(6,7){\includegraphics{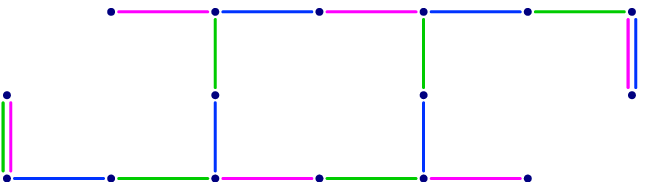}}
   \put( 11,18){\MyMag{\scriptsize$3$}}
   \put(  0,18){\MyGreen{\scriptsize$4$}}
   \put( 20, 0){\MyCyan{\scriptsize$2$}}
   \put( 50,58){\MyMag{\scriptsize$3$}}
   \put( 50, 0){\MyGreen{\scriptsize$4$}}

   \put( 70,42){\MyGreen{\scriptsize$4$}}
   \put( 70,18){\MyCyan{\scriptsize$2$}}

   \put( 80,58){\MyCyan{\scriptsize$2$}}
   \put( 80, 0){\MyMag{\scriptsize$3$}}
   \put(110,58){\MyMag{\scriptsize$3$}}
   \put(110, 0){\MyGreen{\scriptsize$4$}}

   \put(130,42){\MyGreen{\scriptsize$4$}}
   \put(130,18){\MyCyan{\scriptsize$2$}}

   \put(140,58){\MyCyan{\scriptsize$2$}}
   \put(140, 0){\MyMag{\scriptsize$3$}}

   \put(170,58){\MyGreen{\scriptsize$4$}}

   \put(192,42){\MyCyan{\scriptsize$2$}}
   \put(181,42){\MyMag{\scriptsize$3$}}
  \end{picture}\vspace{10pt}
\]
\[
  \begin{picture}(251,64)
   \put(6,7){\includegraphics{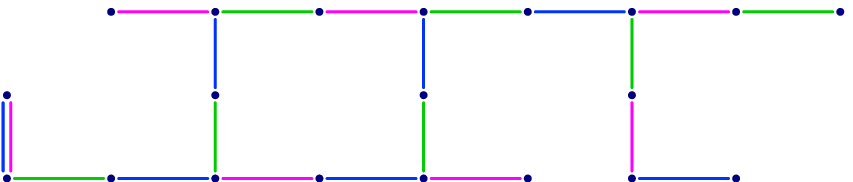}}
   \put( 11,18){\MyMag{\scriptsize$3$}}
   \put(  0,18){\MyCyan{\scriptsize$2$}}
   \put( 20, 0){\MyGreen{\scriptsize$4$}}
   \put( 50,58){\MyMag{\scriptsize$3$}}
   \put( 50, 0){\MyCyan{\scriptsize$2$}}

   \put( 70,42){\MyCyan{\scriptsize$2$}}
   \put( 70,18){\MyGreen{\scriptsize$4$}}

   \put( 80,58){\MyGreen{\scriptsize$4$}}
   \put( 80, 0){\MyMag{\scriptsize$3$}}
   \put(110,58){\MyMag{\scriptsize$3$}}
   \put(110, 0){\MyCyan{\scriptsize$2$}}

   \put(130,42){\MyCyan{\scriptsize$2$}}
   \put(130,18){\MyGreen{\scriptsize$4$}}

   \put(140,58){\MyGreen{\scriptsize$4$}}
   \put(140, 0){\MyMag{\scriptsize$3$}}

   \put(170,58){\MyCyan{\scriptsize$2$}}

   \put(190,42){\MyGreen{\scriptsize$4$}}
   \put(190,18){\MyMag{\scriptsize$3$}}

   \put(200,58){\MyMag{\scriptsize$3$}}
   \put(200, 0){\MyCyan{\scriptsize$2$}}

   \put(230,58){\MyGreen{\scriptsize$4$}}

  \end{picture}
\]
\caption{Dual equivalence graphs in the Grassmannian Bruhat order for $n=5$}\label{F:GFive}
\end{figure}
The first four are all strong dual equivalence graphs.
For each of these dual equivalence graphs $\calG$, the pair of symmetric functions
$K_{\calG}$ and $K_{\omega\rho\calG}=\omega K_{\calG}$ depend only upon the number of
nodes and are given in Table~\ref{T:GFive}, 
\begin{table}[htb]
 \caption{Symmetric functions of graphs vs. number of vertices}\label{T:GFive}
 \begin{tabular}{|l||c|c|c|c|c|c||}\hline
 Vertices&1&4&5&6&9&11\\\hline
 $K_\calG$&$s_{(5)}$&$s_{(4,1)}$&$s_{(3,2)}$&$s_{(3,1,1)}$&$s_{(4,1)}+s_{(3,2)}$
         &$s_{(3,2)}+s_{(3,1,1)}$\raisebox{-5pt}{\rule{0pt}{3pt}}\\\hline
 \end{tabular}\vspace{5pt}

 \begin{tabular}{|l||c|c||}\hline
  Vertices&16&20\\\hline\raisebox{-5pt}{\rule{0pt}{3pt}}
  $K_\calG$&$s_{(2,2,1)}+s_{(3,1,1)}+s_{(3,2)}$&$s_{(2,2,1)}+s_{(3,1,1)}+s_{(3,2)}+s_{(4,1)}$\\\hline
 \end{tabular}
\end{table}
where we write only one of the two functions $K_\calG$ and $K_{\rho\omega\calG}$.
This census for $n=4$ and $n=5$ demonstrates Condition (iv.a) of local Schur positivity.

\subsection{Dual equivalence graphs from chains of length six}\label{SS:six}

Table~\ref{T:six} gives the numbers of graphs from chains of length six by the numbers of
vertices, as well as the numbers of isomorphism classes and the different symmetric functions.
\begin{table}[htb]
 \caption{Graphs and symmetric functions for $n$=6}\label{T:six}

 \begin{tabular}{|c||c|c|c|c|c|c|c|c|c|c||}\hline
  vertices&1&5&9&10&14&16&19&21&26&35 \\\hline\hline
  graphs&1806&20922&18594&19828&796&16134&786&414&948&738\\\hline
  $\{\rho,\omega\}$-classes&1&2&1&1&4&1&7&3&8&6 \\\hline
  Symm.\  Fns.\ &2&4&2&2&4&1&4&2&2&2 \\\hline
 \end{tabular}\vspace{5pt}

 \begin{tabular}{|c||c|c|c|c|c|c|c|c|c|c|c|c|c|c||}\hline
  vertices&37&40&42&45&47&54&56&59&61&66&75&80&91&96 \\\hline\hline
  graphs&14&408&48&22&254&53&370&12&54&28&10&48&4&4\\\hline
  $\{\rho,\omega\}$-classes&1&10&1&2&3&2&6&1&6&3&1&1&1&1 \\\hline
  Symm.\ Fns.\ &2&4&2&4&2&3&4&2&6&2&2&1&2&1 \\\hline
 \end{tabular}
\end{table}

\subsection{Condition (iv.b)}
Condition (iv.b) may be verified by inspection of the dual equivalence graphs when
$n=5$. 
For this, we have $i=3$, and we first need an edge with label 3 (so that
$c\neq\varphi_i(c)$) for which both endpoints admit one edge with label 2 and one edge with
label 4. 
Among the 12 dual equivalence graphs, this only occurs for the middle edge in the graph
$\calG$ on nine vertices containing a triangle.
We display this graph, labeling the vertices with their descent sets.
\[
  \begin{picture}(292,56)(-5,-3)
   \put(  0,4.5){\includegraphics{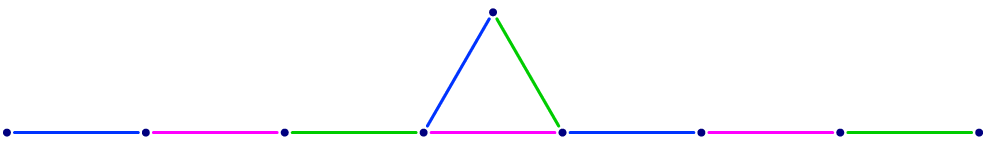}}
   \put( -5,-2){\scriptsize$\{1\}$}   \put( 19,10){\MyCyan{\scriptsize$2$}}
   \put( 35,-2){\scriptsize$\{2\}$}   \put( 59,10){\MyMag{\scriptsize$3$}}
   \put( 75,-2){\scriptsize$\{3\}$}   \put( 99,10){\MyGreen{\scriptsize$4$}}
   \put(112,-2){\scriptsize$\{2,4\}$} \put(139,10){\MyMag{\scriptsize$3$}}
   \put(125,23){\MyCyan{\scriptsize$2$}}
   \put(132,47){\scriptsize$\{1,4\}$}
   \put(155,23){\MyGreen{\scriptsize$4$}}
   \put(152,-2){\scriptsize$\{1,3\}$}\put(179,10){\MyCyan{\scriptsize$2$}}
   \put(195,-2){\scriptsize$\{2\}$} \put(219,10){\MyMag{\scriptsize$3$}}
   \put(235,-2){\scriptsize$\{3\}$} \put(259,10){\MyGreen{\scriptsize$4$}}
   \put(275,-2){\scriptsize$\{4\}$} 
  \end{picture}
\]
Note that $\rho\calG=\calG$.
Condition (iv.b) considers the connected subgraph containing this middle edge and
any edges with labels 2 and 3, as well as that containing this middle edge and any edges
with labels 3 and 4.
Each consists of five vertices, and for the first, we restrict each descent set to $\{1,2,3\}$
and for the second we restrict each descent set to $\{2,3,4\}$, and subtract one.
Condition (iv.b) is that the quasisymmetric functions of each of these restricted
subgraphs are equal.
Here is the subgraph with edge labels 2 and 3, and the restricted graph.
\[
  \begin{picture}(140,56)(110,-3)
   \put(120,4.5){\includegraphics{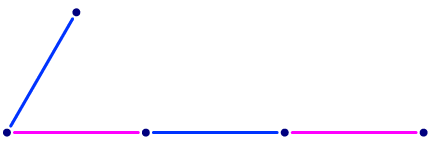}}
   \put(110,-2){\scriptsize$\{2,4\}$} \put(139,10){\MyMag{\scriptsize$3$}}
   \put(125,23){\MyCyan{\scriptsize$2$}}
   \put(132,47){\scriptsize$\{1,4\}$}
   \put(152,-2){\scriptsize$\{1,3\}$}\put(179,10){\MyCyan{\scriptsize$2$}}
   \put(195,-2){\scriptsize$\{2\}$} \put(219,10){\MyMag{\scriptsize$3$}}
   \put(235,-2){\scriptsize$\{3\}$}
  \end{picture}
  \qquad
  \begin{picture}(175,56)(75,-3)
   \put( 80,4.5){\includegraphics{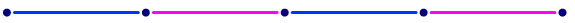}}
   \put( 75,-2){\scriptsize$\{1\}$} \put( 99,10){\MyCyan{\scriptsize$2$}}
   \put(115,-2){\scriptsize$\{2\}$} \put(139,10){\MyMag{\scriptsize$3$}}
   \put(152,-2){\scriptsize$\{1,3\}$}\put(179,10){\MyCyan{\scriptsize$2$}}
   \put(195,-2){\scriptsize$\{2\}$} \put(219,10){\MyMag{\scriptsize$3$}}
   \put(235,-2){\scriptsize$\{3\}$}
  \end{picture}
\]
Here is the subgraph with edge labels 3 and 4, and the restricted graph.
\[
  \begin{picture}(140,56)(35,-3)
   \put( 40,4.5){\includegraphics{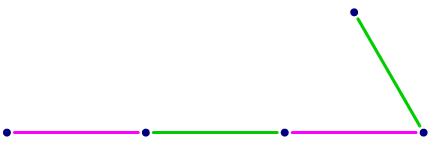}}
   \put( 35,-2){\scriptsize$\{2\}$}   \put( 59,10){\MyMag{\scriptsize$3$}}
   \put( 75,-2){\scriptsize$\{3\}$}   \put( 99,10){\MyGreen{\scriptsize$4$}}
   \put(112,-2){\scriptsize$\{2,4\}$} \put(139,10){\MyMag{\scriptsize$3$}}
   \put(132,47){\scriptsize$\{1,4\}$}
   \put(155,23){\MyGreen{\scriptsize$4$}}
   \put(152,-2){\scriptsize$\{1,3\}$}
  \end{picture}
  \qquad
  \begin{picture}(180,56)(75,-3)
   \put( 80,4.5){\includegraphics{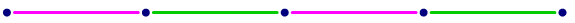}}
   \put( 75,-2){\scriptsize$\{2\}$}  \put( 99,10){\MyMag{\scriptsize$3$}}
   \put(115,-2){\scriptsize$\{3\}$}  \put(139,10){\MyGreen{\scriptsize$4$}}
   \put(152,-2){\scriptsize$\{2,4\}$}\put(179,10){\MyMag{\scriptsize$3$}}
   \put(195,-2){\scriptsize$\{3\}$}  \put(219,10){\MyGreen{\scriptsize$4$}}
   \put(235,-2){\scriptsize$\{4\}$} 

  \end{picture}
\]
Observe that if we subtract 1 from all descents and edge labels in the second restricted
graph, we obtain the first restricted graph.
This shows that Condition (iv.b) holds.
We note that we could also deduce this fact for $\calG$ using that $\rho\calG=\calG$.

\subsection{Condition (iv.c)}\label{SS:iv.c}
By Proposition~\ref{P:properties}, the reversal of a chain in the interval
$[e,\zeta]_\preceq$ gives a chain in $[e,\zeta^{-1}]_\preceq$ whose edge labels are the
reverse of the original chain. 
Thus if $\calG$ is the colored graph constructed from labeled chains in
$[e,\zeta]_\preceq$, then $\rho\omega\calG$ is the colored graph constructed from labeled
chains in $[e,\zeta^{-1}]_\preceq$.
Thus $\calG$ satisfies both conditions (iv.c) (the one given in detail {\it and} its
reversal) if and only if both $\calG$ and $\rho\omega\calG$ satisfy the same one of these 
two conditions.
Thus we will only check the condition given in detail, for all graphs from chains of
length $n=6$.

This condition concerns 5-edges incident to flat 4-chains in a graph $\calG$.
A flat 4-chain in is a list $c_1,c_2,\dotsc,c_{2r}$ of distinct vertices of
$\calG$ where, for each $j=1,\dotsc,r$ the vertices $c_{2j-1}$ and $c_{2j}$ are connected
by a 4-edge.
Furthermore, for each $j=1,\dotsc,r{-}1$, the vertex $c_{2j}$ is connected to $c_{2j+1}$ by an
alternating sequence of 2- and 3- edges, beginning and ending with a 2-edge,
and $c_{2j+1}$ is {\it not} incident to a 3-edge.
From our classification of graphs for chains of length 4 in Subsection~\ref{SS:four}, these
connections are one of two types. 
We display them below, showing the 2- and 3- edges between $c_{2j}$ and
$c_{2j+1}$, as well as those incident on $c_{2j}$ and $c_{2j-1}$.
\[
   \begin{picture}(84,20)
   \put( 0, 7){\includegraphics{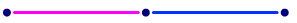}}
   \put(20,12){\MyMag{\scriptsize$3$}} \put(36,2){\scriptsize$c_{2j}$}
   \put(60,12){\MyCyan{\scriptsize$2$}}\put(72,2){\scriptsize$c_{2j+1}$}
  \end{picture} 
    \qquad
  \begin{picture}(164,20)
   \put(  0,7){\includegraphics{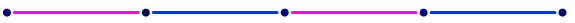}}
   \put( 20,12){\MyMag{\scriptsize$3$}}  \put( 36,2){\scriptsize$c_{2j}$}
   \put( 60,12){\MyCyan{\scriptsize$2$}}
   \put(100,12){\MyMag{\scriptsize$3$}}
   \put(140,12){\MyCyan{\scriptsize$2$}}  \put(152,2){\scriptsize$c_{2j+1}$}
  \end{picture}
\]
 As $c_{2j+1}$ is not incident to any 3-edge, we must have that $c_{2j}$ is
 incident to a 3-edge.

 Then Condition (iv.c) asserts that for every flat 4-chain $c_1,\dotsc,c_{2r}$, if we have 
 that both $c_{2j-1}$ and $c_{2j}$ admit a 5-edge for some  $1<j<r$, then either each of
 the first $2j$ vertices $c_1,\dotsc,c_{2j-1},c_{2j}$ admit a 5-edge, or else 
 each of the last $2r{-}2j{+}2$ vertices $c_{2j-1},c_{2j},\dotsc,c_{2r}$ admit a 5-edge.
 This condition is vacuous except for $r\geq 3$.

 Figure~\ref{F:21} shows a dual equivalence graph $\calG$ with a flat 4-chain.
 This graph with 21 vertices comes from the chain
 $(t_{27},t_{45},t_{12},t_{34},t_{56},t_{45})$ in the interval 
\begin{figure}[htb]
  \begin{picture}(255,172)(5,0)
   \put(10,0){\includegraphics{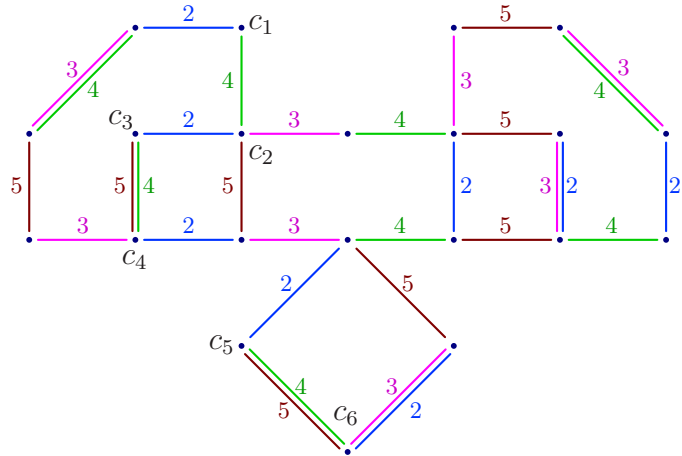}}

   \put(  5,100){\MyMar{\scriptsize$5$}}
   \put( 30, 85){\MyMag{\scriptsize$3$}}
   \put( 26,144){\MyMag{\scriptsize$3$}}
   \put( 34,136.5){\MyGreen{\scriptsize$4$}}
   \put( 44,100){\MyMar{\scriptsize$5$}}  \put(42,126){$c_3$}
   \put( 55,100){\MyGreen{\scriptsize$4$}}
   \put( 70, 85){\MyCyan{\scriptsize$2$}} \put(47, 73){$c_4$}
   \put( 70,125){\MyCyan{\scriptsize$2$}}
   \put( 70,165){\MyCyan{\scriptsize$2$}}
   \put( 85,140){\MyGreen{\scriptsize$4$}} \put(95,162){$c_1$}
   \put( 85,100){\MyMar{\scriptsize$5$}}   \put(95,113){$c_2$}
                                           \put(80, 41){$c_5$}
   \put(110, 85){\MyMag{\scriptsize$3$}}
   \put(110,125){\MyMag{\scriptsize$3$}}
   \put(107, 63){\MyCyan{\scriptsize$2$}}
   \put(106, 15){\MyMar{\scriptsize$5$}}
   \put(112.5, 24){\MyGreen{\scriptsize$4$}}
                                            \put(127,14){$c_6$}
   \put(150,125){\MyGreen{\scriptsize$4$}}
   \put(150, 85){\MyGreen{\scriptsize$4$}}
   \put(153, 63){\MyMar{\scriptsize$5$}}
   \put(147, 24){\MyMag{\scriptsize$3$}}
   \put(156, 15){\MyCyan{\scriptsize$2$}}

   \put(175,140){\MyMag{\scriptsize$3$}}
   \put(175,100){\MyCyan{\scriptsize$2$}}
   \put(190, 85){\MyMar{\scriptsize$5$}}
   \put(190,125){\MyMar{\scriptsize$5$}}
   \put(190,165){\MyMar{\scriptsize$5$}}

   \put(205,100){\MyMag{\scriptsize$3$}}
   \put(215,100){\MyCyan{\scriptsize$2$}}
   \put(230, 85){\MyGreen{\scriptsize$4$}}

   \put(234.5,144.5){\MyMag{\scriptsize$3$}}
   \put(225.5,136){\MyGreen{\scriptsize$4$}}

   \put(254,100){\MyCyan{\scriptsize$2$}}

  \end{picture}
\caption{A dual equivalence graph with a flat 4-chain}\label{F:21}
\end{figure}
 $[e,(1,2,7)(3,5,4,6)]_\preceq$, which corresponds to the lower left vertex of $\calG$.
 Since flat 4-chains involve only 2-, 3-, and 4-edges, and $\varphi_i(c)$ has the same
 sixth transposition as $c$ for $i=2,3,4$, a flat 4-chain in a graph with $n=6$ comes by
 appending the same sixth transposition to all chains of vertices in a flat 4-chain in a
 graph with $n=5$.
 For example, the flat 4-chain in the graph of Figure~\ref{F:21} comes from the flat
 4-chain in the graph with $n=5$ depicted below.
\[
  \begin{picture}(196,66)(0,-1)
   \put(6,7){\includegraphics{g5_16a.eps}}
   \put( 11,18){\MyMag{\scriptsize$3$}}
   \put(  0,18){\MyGreen{\scriptsize$4$}}
   \put( 20, 0){\MyCyan{\scriptsize$2$}} 
   \put( 50,58){\MyMag{\scriptsize$3$}}
   \put( 50, 0){\MyGreen{\scriptsize$4$}}
     \put(32,-1){$c_1$} \put(62,-1){$c_2$} \put(55,30){$c_3$}
     \put(62,62){$c_4$} \put(152,62){$c_5$} \put(185,62){$c_6$}
   \put( 70,42){\MyGreen{\scriptsize$4$}}
   \put( 70,18){\MyCyan{\scriptsize$2$}}

   \put( 80,58){\MyCyan{\scriptsize$2$}}
   \put( 80, 0){\MyMag{\scriptsize$3$}}
   \put(110,58){\MyMag{\scriptsize$3$}}
   \put(110, 0){\MyGreen{\scriptsize$4$}}

   \put(130,42){\MyGreen{\scriptsize$4$}}
   \put(130,18){\MyCyan{\scriptsize$2$}}

   \put(140,58){\MyCyan{\scriptsize$2$}}
   \put(140, 0){\MyMag{\scriptsize$3$}}

   \put(170,58){\MyGreen{\scriptsize$4$}}

   \put(192,42){\MyCyan{\scriptsize$2$}}
   \put(181,42){\MyMag{\scriptsize$3$}}
  \end{picture}
\]

 Thus it is possible to check Condition (iv.c) by first generating all flat 4-chains in
 graphs for $n=5$, and then considering all possible ways to append a transposition to all 
 the chains in the Grassmannian-Bruhat order appearing in those flat 4-chains.

\providecommand{\bysame}{\leavevmode\hbox to3em{\hrulefill}\thinspace}
\providecommand{\MR}{\relax\ifhmode\unskip\space\fi MR }
\providecommand{\MRhref}[2]{%
  \href{http://www.ams.org/mathscinet-getitem?mr=#1}{#2}
}
\providecommand{\href}[2]{#2}


\begin{thebibliography}{10}

\bibitem{A_TAMS}
Sami~H. Assaf, \emph{Dual equivalence and a {S}chur positivity}, {\tt arXiv:1107.0090}.

\bibitem{A_JAMS}
Sami~H. Assaf, \emph{Dual equivalence graphs and a combinatorial proof of {L}{L}{T}
  and {M}acdonald positivity}, {\tt arXiv:1005.3759}.

\bibitem{BeBi}
Nantel Bergeron and Sara Billey, \emph{R{C}-graphs and {S}chubert polynomials},
  Experiment. Math. \textbf{2} (1993), no.~4, 257--269.

\bibitem{BS_Duke}
Nantel Bergeron and Frank Sottile, \emph{Schubert polynomials, the {B}ruhat
  order, and the geometry of flag manifolds}, Duke Math. J. \textbf{95} (1998),
  no.~2, 373--423.

\bibitem{BS_Hopf}
Nantel Bergeron and Frank Sottile, \emph{Hopf algebras and edge-labeled posets}, J. Algebra \textbf{216}
  (1999), no.~2, 641--651.

\bibitem{Monoide}
Nantel Bergeron and Frank Sottile, \emph{A monoid for the {G}rassmannian {B}ruhat order}, European J.
  Combin. \textbf{20} (1999), no.~3, 197--211.

\bibitem{BS_Skew}
Nantel Bergeron and Frank Sottile, \emph{Skew {S}chubert functions and the {P}ieri formula for flag
  manifolds}, Trans. Amer. Math. Soc. \textbf{354} (2002), no.~2, 651--673
  (electronic).

\bibitem{Bertram}
Aaron Bertram, \emph{Quantum {S}chubert calculus}, Adv. Math. \textbf{128}
  (1997), no.~2, 289--305.

\bibitem{BCMP}
Anders Buch, Pierre-Emmanuel Chaput, Leonardo~C. Mihalcea, and Nicolas Perrin,
  \emph{Tba}, 2013.

\bibitem{BKPT}
Anders Buch, Andrew Kresch, Kevin Purbhoo, and Harry Tamvakis, \emph{The puzzle
  conjecture for the cohomology of two-step flag manifolds}, 2014, {\tt
  arXiv:1401.1725}.

\bibitem{BKT03}
Anders~Skovsted Buch, Andrew Kresch, and Harry Tamvakis, \emph{Gromov-{W}itten
  invariants on {G}rassmannians}, J. Amer. Math. Soc. \textbf{16} (2003),
  no.~4, 901--915 (electronic).

\bibitem{GR98}
A.~M. Garsia and J.~Remmel, \emph{Plethystic formulas and positivity for
  {$q,t$}-{K}ostka coefficients}, Mathematical essays in honor of
  {G}ian-{C}arlo {R}ota ({C}ambridge, {MA}, 1996), Progr. Math., vol. 161,
  Birkh\"auser Boston, Boston, MA, 1998, pp.~245--262.

\bibitem{GT96}
A.~M. Garsia and G.~Tesler, \emph{Plethystic formulas for {M}acdonald
  {$q,t$}-{K}ostka coefficients}, Adv. Math. \textbf{123} (1996), no.~2,
  144--222.

\bibitem{Gessel}
Ira~M. Gessel, \emph{Multipartite {$P$}-partitions and inner products of skew
  {S}chur functions}, Combinatorics and algebra ({B}oulder, {C}olo., 1983),
  Contemp. Math., vol.~34, Amer. Math. Soc., Providence, RI, 1984,
  pp.~289--317.

\bibitem{HHL}
J.~Haglund, M.~Haiman, and N.~Loehr, \emph{A combinatorial formula for
  {M}acdonald polynomials}, J. Amer. Math. Soc. \textbf{18} (2005), no.~3,
  735--761.

\bibitem{Ha92}
Mark~D. Haiman, \emph{Dual equivalence with applications, including a
  conjecture of {P}roctor}, Discrete Math. \textbf{99} (1992), no.~1-3,
  79--113.

\bibitem{Ha01}
Mark~D. Haiman, \emph{Hilbert schemes, polygraphs and the {M}acdonald positivity
  conjecture}, J. Amer. Math. Soc. \textbf{14} (2001), no.~4, 941--1006
  (electronic).

\bibitem{KN98}
Anatol~N. Kirillov and Masatoshi Noumi, \emph{Affine {H}ecke algebras and
  raising operators for {M}acdonald polynomials}, Duke Math. J. \textbf{93}
  (1998), no.~1, 1--39.

\bibitem{Kn97}
Friedrich Knop, \emph{Integrality of two variable {K}ostka functions}, J. Reine
  Angew. Math. \textbf{482} (1997), 177--189.

\bibitem{KLS}
Allen Knutson, Thomas Lam, and David Speyer, \emph{Positroid varieties:
  Juggling and geometry}, 2011, {\tt arXiv:1111.3660}.

\bibitem{LLT}
Alain Lascoux, Bernard Leclerc, and Jean-Yves Thibon, \emph{Ribbon tableaux,
  {H}all-{L}ittlewood functions, quantum affine algebras, and unipotent
  varieties}, J. Math. Phys. \textbf{38} (1997), no.~2, 1041--1068.

\bibitem{LS82}
Alain Lascoux and Marcel-Paul Sch{\"u}tzenberger, \emph{Polyn\^omes de
  {S}chubert}, C. R. Acad. Sci. Paris S\'er. I Math. \textbf{294} (1982),
  no.~13, 447--450.

\bibitem{LS85}
Alain Lascoux and Marcel-Paul Sch{\"u}tzenberger, \emph{{S}chubert polynomials and the {L}ittlewood-{R}ichardson rule},
  Lett. Math. Phys. \textbf{10} (1985), 111--124.

\bibitem{K_Pieri}
Cristian Lenart and Frank Sottile, \emph{A {P}ieri-type formula for the
  {$K$}-theory of a flag manifold}, Trans. Amer. Math. Soc. \textbf{359}
  (2007), no.~5, 2317--2342 (electronic).

\bibitem{LR1934}
Dudley~E. Littlewood and Archibald~R. Richardson, \emph{Group characters and
  algebra}, Philos. Trans. Roy. Soc. London. \textbf{233} (1934), 99--141.

\bibitem{Mac91}
Ian~G. Macdonald, \emph{Notes on {S}chubert polynomials}, Laboratoire de
  combinatoire et d'informatique math\'ematique {(LACIM)}, Univ.~du Qu\'ebec
  \`a Montr\'eal, Montr\'eal, 1991.

\bibitem{Mac95}
Ian~G. Macdonald, \emph{Symmetric functions and {H}all polynomials}, second ed., Oxford
  Mathematical Monographs, The Clarendon Press Oxford University Press, New
  York, 1995.

\bibitem{Monk}
D.~Monk, \emph{The geometry of flag manifolds}, Proc. London Math. Soc.
  \textbf{9} (1959), 253--286.

\bibitem{Postnikov}
Alexander Postnikov, \emph{On a quantum version of {P}ieri's formula}, Advances
  in geometry, Progr. Math., vol. 172, Birkh\"auser Boston, Boston, MA, 1999,
  pp.~371--383.

\bibitem{Si96}
Siddhartha Sahi, \emph{Interpolation, integrality, and a generalization of
  {M}acdonald's polynomials}, Internat. Math. Res. Notices (1996), no.~10,
  457--471.

\bibitem{Sc_jdt}
Marcel-Paul Sch{\"u}tzenberger, \emph{La correspondence de {R}obinson},
  Combinatoire et Repr{\'e}sentation du Groupe Sym{\'e}trique (D.~Foata, ed.),
  Lecture Notes in Math., vol. 579, Springer-Verlag, 1977, pp.~59--135.

\bibitem{PieriSchubert}
Frank Sottile, \emph{Pieri's formula for flag manifolds and {S}chubert
  polynomials}, Ann. Inst. Fourier (Grenoble) \textbf{46} (1996), no.~1,
  89--110.

\bibitem{Thomas}
Glanfwrd~P. Thomas, \emph{On {S}chensted's construction and the multiplication
  of {S}chur functions}, Adv. in Math. \textbf{30} (1978), 8--32.

\end{thebibliography}
\end{document}